\documentclass{amsart}

\usepackage[utf8]{inputenc}
\usepackage[english]{babel}

\usepackage[a4paper]{geometry}

\usepackage{amsmath}
\usepackage{amssymb}
\usepackage{amsfonts}
\usepackage{amsthm}
\usepackage{color}
\usepackage{graphicx}
\usepackage{mathtools}
\usepackage{cases}
\usepackage{hyperref}

\newtheorem{theorem}{Theorem}[section]
\newtheorem{corollary}[theorem]{Corollary}
\newtheorem{proposition}[theorem]{Proposition}
\newtheorem{lemma}[theorem]{Lemma}

\theoremstyle{definition}
\newtheorem{definition}{Definition}[section]

\theoremstyle{remark}
\newtheorem{remark}{Remark}

\newcommand{\R}{\mathbb{R}}
\newcommand{\C}{\mathbb{C}}
\newcommand{\N}{\mathbb{N}}

\newcommand{\V}{\textsc{vdm}}
\newcommand{\Vm}{V}

\begin{document}
\title{ Multidimensional pseudo Leja sequences }
\author{Dimitri Jordan Kenne}
\address{Doctoral school of Exact and Natural Sciences, Jagiellonian University, Łojasiewicza 11, Kraków, 30-348, Lesser Poland, Poland.}
\email{dimitri.kenne@doctoral.uj.edu.pl}
\date{December 2023}

\begin{abstract}
The one-dimensional pseudo Leja sequences introduced in \cite{bialas2012pseudo}, as an alternative to Leja sequences, provides us with good interpolation nodes for the approximation of holomorphic functions. We propose a definition of multidimensional pseudo Leja sequences associated to a compact set $K$ of the complex space $\mathbb{C}^p$ which generalises both the one-dimensional version and the multidimensional Leja sequences. We show that these sequences can be used to calculate the transfinite diameter of $K$. We also present a relation to the pluricomplex Green function associated to $K$. Subsequently, we show that the intertwinning of pseudo Leja sequences is still a pseudo Leja sequence. We give a method to compute pseudo Leja sequences with the help of discrete meshes.
\end{abstract}

\subjclass[2000]{Primary 32E30, 41A05, 41A63}

\keywords{Lagrange interpolation, Leja sequences, transfinite diameter, Green function, admissible mesh, intertwining sequence}

\maketitle

\section{Introduction} \label{sec1}
We are interested in the computation of goods points for approximation by multivariate polynomial interpolation. Here, by good points, we mean arrays of interpolation nodes in a regular ( see below) compact set $K$ for which the corresponding sequence of Lagrange interpolation polynomials of any holomorphic function $f$ on a neighbourhood of $K$, converges uniformly to $f$ on $K$. In the univariate case, a characterisation of such points is very well-know (\cite{bloom2012polynomial}, \cite{bloom1992polynomial}, \cite{gaier1987lectures}). In contrast, in the multivariate case, no characterisation is known and very few examples of such good points are currently available. The only general example is given by arrays of Fekete points (Fekete points for $K\subset \mathbb{C}^p$ are points which maximises the (modulus) of the Vandermonde determinant of a given order, see below). Unfortunately, these points are impossible to determine explicitly and even the numerical computation of their discrete version is complicated, so that they are essentially of theoretical interest. Apart from the Fekete points, the sole examples of good points are furnished by the intertwining of univariate good points (\cite{bialas2012pseudo}, \cite{bertrand2022newton}, \cite{siciak1962some}) and special remarkable configurations such as the famous Padua points in the unit square of $\R^2$ ( \cite{bos2006bivariate}, \cite{bos2007bivariate}, \cite{caliari2005bivariate}). The interest of other configurations is supported by numerical experiments (see e.g. \cite{baglama1998fast}, \cite{bos2011multivariate}). 
\par\smallskip 
In the univariate case, while the nature of good points is completely understood from the theoretical point of view, the computation of such points for an arbitrary regular compact set is far from being easy. A way of constructing univariate good points were introduced in \cite{bialas2012pseudo} with the notion of pseudo Leja sequences. Such a sequence is defined recursively as follows. The first point $\xi_0 \in K$ is chosen arbitrarily and, if the first $j$ points $\xi_0, \dots, \xi_{j-1}$ are already obtained, then $\xi_j$ is chosen in $K$ to ensure that   
\begin{equation}
M_j \prod_{i=0}^{j-1} |\xi_j -\xi_i|  \ge \max_{z\in K}\prod_{i=0}^{j-1} |z -\xi_i|. \label{equ:0}
\end{equation}
Here $(M_j)$ is a sequence of real numbers $M_j \ge 1, \ j \ge 1$ which is required to be of sub-exponential growth.
The classical Leja points correspond to the special case, $M_j=1$, $j \ge 1$. The point is that the possibility $M_j>1$ alleviates the computation of the points without weakening the approximation quality of the Leja points : it is shown in \cite[Theorem 1, page 56]{bialas2012pseudo} that they are good points (for reasonable compact sets) in the sense above. The strong result that, on a set of positive logarithmic capacity, the Lebesgue constants of one-dimensional pseudo Leja points with bounded sequence $(M_j)$ are subexponential has been recently established in \cite{totik2023lebesgue}.  
\par\smallskip
It is readily seen that the product in \eqref{equ:0} can be replaced by the (modulus) of a Vandermonde determinant and this gives the way to the extension of the definition to the higher dimensional case. The study of this generalisation  is the subject of the present paper. The precise definition of a multivariate pseudo Leja sequence is given in Section \ref{sec2} . Although we are not able to show that our multivariate pseudo Leja points are good points (in the multivariate case, this is not even known for ordinary Leja sequence), we prove that they permit to recapture the multivariate transfinite diameter of $K$, which, in turn, by deep results due to Berman, Boucksom and Nyström (\cite[Theorem A, page 3 ]{berman2011fekete}) and Bloom (\cite[Corollary 4.5, page 1563]{bloom1998distribution}), implies that they permit to recover the multivariate equilibrium measure and the pluri-complex Green function for a regular compact set $K$. This is usually regarded as an indicator that the points should actually be good in the above sense, especially because these conditions are equivalent in the univariate case (see \cite[Theorem 1, page 65]{gaier1987lectures} or \cite[Theorem 1.5, page 446]{bloom1992polynomial}).
\par\smallskip
In  Section\ref{sec:computing}, we will briefly explain how to compute pseudo-Leja points, showing that discrete Leja points can be seen as examples of pseudo-Leja points. 
\par\smallskip 
We will also prove an interesting stability property of multivariate pseudo Leja sequences. Roughly speaking, by intertwining, in some specific way, the points of a pseudo Leja sequence for $K_1\in \C^{p_1}$ and that of a pseudo Leja sequence for $K_2\in \C^{p_2}$, we obtain a pseudo Leja sequence for the product  $K_1\times K_2$, see \ref{intertwinning}.
\par\smallskip
For the convenience of the reader, we will now recall standard notations and basic definitions on multivariate interpolation. Multi-indices will be ordered according to the graded lexicographic order, so that $\alpha\prec \beta$ if $|\alpha|<|\beta|$ or $|\alpha|=|\beta|$ and the left-most non zero element of $\alpha -\beta$ is negative. We arrange the elements of $\N^p$ in an increasing sequence $\kappa=\kappa^{(p)}$ (with respect to $\prec$), 
\begin{equation}\label{eq:kappa}
\kappa=\kappa^{(p)} : \N \to \N^p. 
\end{equation}
Thus, when $p=2$, we have
\begin{multline*}
\kappa(0)=(0,0)\prec \kappa(1)=(0,1) \prec \kappa(2)=(1,0) \prec \kappa(3)=(0,2) \\ \prec \kappa(4)=(1,1) \prec \kappa(5)=(2,0) 
\prec \kappa(6)=(0,3) \prec \dots. 
\end{multline*}
The monomials of $p$ complex variables will be denoted as follows
\begin{equation}
	e_N(z)= z^{\kappa(N)} = z_1^{\alpha_1}\cdots z_p^{\alpha_p},\quad \kappa(N)=(\alpha_1,\dots,\alpha_p). \label{equ:12}
\end{equation}
We will use the space of polynomials spanned by the first $N$ monomials :
\begin{equation*}
\mathcal{P}_N = \textbf{span}\{e_i: \ i= 0,\dots, N-1\}, \quad N\ge 1. 
\end{equation*}
The dimension of the classical space of polynomial of total degree not bigger than $d$ is of dimension
\[h_d = \binom{p+d}{d},\]
thus
\[N=h_d-1 \implies \mathcal{P}_N=\textbf{span}\{z\to z^\alpha\;:\; |\alpha|\leq d\}.\]
The Vandermonde determinant of a set of the points $\xi_0, \dots, \xi_q$ in $\C^p$ is defined as 
\begin{equation}
	\V(\xi_0, \dots, \xi_q) = \det\left[e_N(\xi_M)\right]_{0 \leq M,N \leq q}. \label{equ:13}
\end{equation} 
In particular, $\V(\xi_0) = 1$.
Observe that its modulus is independent of the ordering of the points. The Vandermonde determinant is a polynomial in the coordinates of the points $\xi_j$, thus a polynomial in $p(q+1)$ variables. As such, its degree for $q =h_d-1$ is given by  \begin{equation}\label{eq:ld}l_d = \sum_{i=1}^d i (h_i - h_{i-1})=p\binom{p+d}{p+1}.\end{equation}
\par \smallskip
Given a set of $N$ points, $\Omega_N = \{\zeta_{N0}, \dots, \zeta_{NN}\} \subset \C^p$, with a non-zero Vandermonde determinant, we can form the \textbf{Fundamental Lagrange Interpolation Polynomials} (FLIP)
     \begin{equation}\label{eq:flip}
         l_j^{(N)}(z) = \dfrac{\V(\zeta_{N0}, \dots, \zeta_{N(j-1)}, z,\zeta_{N(j+1)} \dots, \zeta_{NN}) }{\V(\zeta_{N0}, \dots, \zeta_{NN}) }, \quad j = 0,\dots, N.
     \end{equation}
Observe that $l_j^{(N)}$ is an element of $\mathcal{P}_{N+1}$. For a function $f$ defined at the points in $\Omega_N$, 
     \begin{equation}\label{eq:lif}
         L_{\Omega_N}f(z) = \sum_{j=0}^{N} f(\zeta_{Nj}) l_j^{(N)}(z)
     \end{equation}
is the \textbf{Lagrange Interpolation Polynomial} (LIP) of $f$ in $\mathcal{P}_{N+1}$ at the points of $\Omega_N$. In particular, 
\begin{equation}f\in \mathcal{P}_{N+1} \implies f= L_{\Omega_N}f(z). \label{eq:lifpol}\end{equation}
In the whole paper the compact sets $K$ considered will be assumed to be (polynomially) \textbf{determining}. This means that no nonzero polynomial vanishes on $K$. Equivalently, $K$ is not included in an algebraic set. This property can also be expressed in terms of Vandermonde determinants. Namely, setting
\begin{equation} \label{eq:defVjK}
V_j(K) = \max_{\xi_0, \dots, \xi_{j-1}\in K} |\V(\xi_0, \dots, \xi_{j-1})|, \quad j\ge 1,    
\end{equation}
then $K$ is determining if and only if no $V_j(K)$ vanishes. The sufficiency follows from an application of \eqref{eq:lifpol}. To see the converse, suppose $V_{N-1}(K) \not = 0$ for $1 \le j < N$ but $V_N(K)=0$ and let $\{\xi_0, \dots \xi_{N-1}\}\subset K$ such that $\V(\xi_0, \dots \xi_{N-1}) \not = 0$. The polynomial $Q_N(z)  = \V(\xi_0, \dots, \xi_{N-1},z)$ vanishes on $K$ but is not the zero polynomial since its $z^{\kappa(N)}$ coefficient is not null so that $K$ is not determining. 
\par \smallskip
If it is necessary to specify the dimension of the ambient space, we will add a superscript to the above objects thus writing for instance $k^{(p)}(N)$, $\mathcal{P}_N^{(p)}$, and so on.
\par \smallskip
For survey on the actual state of knowledge on multivariate polynomial interpolation we refer to \cite{bloom2012polynomial}. 
\section{Pseudo Leja sequences}\label{sec2}	
\begin{definition}\label{def:1} Let $K$ be a determining compact subset in $\C^p$. 
On says that $(\xi_j)$ is a \textbf{pseudo Leja sequence} in $K$ if there exists a sequence of real numbers $(M_j)_{j \ge 1} \subset [1, +\infty)$ satisfying:
\begin{equation}
\label{property1}M_{j} |\V(\xi_0, \dots, \xi_{j-1},\xi_{j})| \ge \max_{z \in K} |\V(\xi_0, \dots, \xi_{j-1}, z)|\quad\text{for any $j  \ge 1$,} \end{equation}
with the growth condition,
\begin{equation}
\label{property2} 
\lim_{d \to + \infty} (\max_{h_{d-1} \le j < h_d} M_j)^{1/d} = 1. 
\end{equation}
We say that $\mathcal{L}$ is a pseudo-Leja sequence of Edrei growth $(M_j)_{j \ge 1}$.
\end{definition}
In the case $p=1$, this definition coincides with that given in \cite{bialas2012pseudo}. A Leja sequence is a pseudo Leja sequence of Edrei growth $1$. The fact that $K$ is determining immediately implies the existence of pseudo Leja sequence with non vanishing Vandermonde determinant. Observe that we may with very limited loss of generality, assume that $(M_j)$ is non-decreasing and, in that case, \eqref{property2} reduces to a somewhat less clumsy condition.
\par 
Observe that if $(\xi_j)$ is a pseudo Leja sequence for $K$ then 
\begin{equation}\label{eq:pslejunis}
    |\V(\xi_0, \dots, \xi_{j-1},\xi_{j})| > 0, \quad \text{for all $j$}.
\end{equation}
This follows from the same reasoning sketched in the introduction : if $j$ was the smallest integer for which
$|\V(\xi_0, \dots, \xi_{j-1}, z)|=0$ on $K$ then 
$P(z)=\V(\xi_0, \dots, \xi_{j-1}, z)$ 
would be a non zero polynomial vanishing on $K$ thus contradicting the fact that it is determining. The non vanishing of the Vandermonde determinant, implies, see \eqref{eq:lif}, the following proposition. 
\begin{proposition} Let $(\xi_i)$ a pseudo Leja sequence as above. Lagrange interpolation at $\Omega_N= \{\xi_{0}, \dots, \xi_{N}\}$, $N\in \N$, is always possible. 
 \end{proposition}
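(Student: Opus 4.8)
The plan is to observe that the assertion reduces entirely to the non-vanishing of a single Vandermonde determinant, which has already been secured by \eqref{eq:pslejunis}. Recall that $\mathcal{P}_{N+1}$ has dimension $N+1$, matching exactly the number of interpolation nodes in $\Omega_N = \{\xi_0, \dots, \xi_N\}$. Saying that Lagrange interpolation at $\Omega_N$ is \emph{possible} means that for every assignment of values at the nodes there is a unique polynomial in $\mathcal{P}_{N+1}$ realising them; by linear algebra this holds precisely when the interpolation matrix $\bigl[e_i(\xi_k)\bigr]_{0 \le i,k \le N}$ is non-singular, that is, when $\V(\xi_0, \dots, \xi_N) \ne 0$ in the notation of \eqref{equ:13}.

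First I would invoke \eqref{eq:pslejunis} with $j = N$, which yields $|\V(\xi_0, \dots, \xi_N)| > 0$. This is exactly the determinant appearing as the denominator in the definition \eqref{eq:flip} of the fundamental Lagrange interpolation polynomials $l_j^{(N)}$; its being nonzero guarantees that each $l_j^{(N)}$ is a well-defined element of $\mathcal{P}_{N+1}$. Consequently the interpolant \eqref{eq:lif} is well-defined for any function $f$ given on the nodes, and by construction $L_{\Omega_N}f(\xi_k)=f(\xi_k)$ for every $k$, so the interpolation problem is solvable; uniqueness follows from the non-singularity of the same matrix.

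There is essentially no genuine obstacle here: the statement is an immediate corollary of \eqref{eq:pslejunis}, and the only real work — deducing that the Vandermonde determinant of the initial segment never vanishes — was already carried out using the determining property of $K$ together with the extremal inequality \eqref{property1}. The single point worth stating carefully is why the conclusion holds for every $N \in \N$ rather than for one fixed stage, but this is immediate since \eqref{eq:pslejunis} is asserted for \emph{all} $j$, so one simply specialises to $j = N$ with $N$ arbitrary.
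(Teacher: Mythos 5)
Your proposal is correct and follows exactly the paper's own (very short) argument: the paper derives the proposition directly from the non-vanishing of the Vandermonde determinants in \eqref{eq:pslejunis}, which makes the fundamental Lagrange polynomials \eqref{eq:flip} well defined. Nothing is missing; your added remarks on uniqueness and on the dimension count are consistent elaborations of the same reasoning.
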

We immediately draw of a consequence of the growth condition that will be used in the proof of our main theorem below.
\begin{lemma}\label{lem:3}
If $\{M_j\}_{j \ge 1}$ is a sequence of real numbers satisfying Property \eqref{property2} from the definition above, then
	\[
		\lim_{d \to + \infty}\big[\prod_{j=1}^{h_d-1} M_{j}\big]^{1/l_d} =1,
	\]
where $l_d$ is defined in \eqref{eq:ld}.
\end{lemma}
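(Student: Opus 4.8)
The plan is to take logarithms and reduce the statement to the vanishing of a weighted average of a null sequence. Since every $M_j \ge 1$, each $\log M_j$ is nonnegative, so $\frac{1}{l_d}\sum_{j=1}^{h_d-1}\log M_j \ge 0$ automatically; it therefore suffices to bound this quantity from above by something tending to $0$ and then exponentiate.

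First I would partition the index set $\{1,\dots,h_d-1\}$ into the consecutive blocks $B_i = \{\,j : h_{i-1}\le j < h_i\,\}$ for $i=1,\dots,d$ (using $h_0=1$), which is an exact partition. Writing $\mu_i = \max_{j\in B_i} M_j$, each block has $h_i-h_{i-1}$ indices on which $M_j \le \mu_i$, so
\[
\sum_{j=1}^{h_d-1}\log M_j \;\le\; \sum_{i=1}^d (h_i-h_{i-1})\,\log\mu_i .
\]
Next I would set $\epsilon_i = \frac{\log\mu_i}{i}$, so that the growth condition \eqref{property2} is exactly the assertion $\epsilon_i \to 0$, and note $\epsilon_i \ge 0$. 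Since $\log\mu_i = i\,\epsilon_i$ and $l_d = \sum_{i=1}^d i(h_i-h_{i-1})$, division gives
\[
\frac{1}{l_d}\sum_{i=1}^d (h_i-h_{i-1})\log\mu_i \;=\; \frac{\sum_{i=1}^d i(h_i-h_{i-1})\,\epsilon_i}{\sum_{i=1}^d i(h_i-h_{i-1})},
\]
a weighted average of the null sequence $(\epsilon_i)$ with nonnegative weights $w_i = i(h_i-h_{i-1})$.

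The heart of the argument is a Toeplitz-type estimate. Given $\eta>0$, I would fix $N$ with $\epsilon_i<\eta$ for all $i>N$ and split the average into a head $\frac{1}{l_d}\sum_{i\le N} w_i\epsilon_i$ and a tail $\frac{1}{l_d}\sum_{i>N} w_i\epsilon_i$. The tail is at most $\eta$, since $\sum_{i>N} w_i \le l_d$; the head has a fixed numerator divided by $l_d = p\binom{p+d}{p+1}\to\infty$, hence tends to $0$. Thus the weighted average is eventually below $2\eta$, and as $\eta$ is arbitrary it converges to $0$.

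Combining the block inequality with nonnegativity yields $0 \le \frac{1}{l_d}\sum_{j=1}^{h_d-1}\log M_j \to 0$, and exponentiating gives $\big[\prod_{j=1}^{h_d-1}M_j\big]^{1/l_d}\to 1$, as claimed. The only genuinely delicate point is the weighted-average limit; everything else is bookkeeping. The key observation that makes it work is that $l_d \to \infty$, which guarantees that the bounded contribution of the finitely many early blocks is washed out, leaving only the eventually small terms to control the limit.
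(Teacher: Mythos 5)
Your proof is correct, but it takes a genuinely different route from the paper's. You take logarithms, keep the exact block decomposition $\{1,\dots,h_d-1\}=\bigcup_{i=1}^d\{j: h_{i-1}\le j<h_i\}$, and recognize the resulting quantity as a weighted average $\sum_{i=1}^d \theta_{d,i}\,\epsilon_i$ of the null sequence $\epsilon_i=\tfrac1i\log\mu_i$ with Toeplitz weights $\theta_{d,i}=i(h_i-h_{i-1})/l_d$, which you then dispatch by the standard head/tail split (the head vanishes because $l_d\to\infty$, the tail is at most $\eta$). The paper instead uses the cruder bound $\prod_{j=1}^{h_d-1}M_j\le\big(\max_{1\le j\le h_d-1}M_j\big)^{h_d}$ together with $h_d/l_d=\tfrac{p+1}{pd}\to0$, and then has to relate the \emph{global} maximum to the blockwise hypothesis \eqref{property2}: it chooses the smallest block index $j(d)$ in which that maximum is attained and argues by cases according to whether $j(d)\to\infty$ (using $j(d)\le d$) or $j(d)$ is eventually constant (in which case the maxima are bounded). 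Your argument avoids that case analysis entirely and is in effect an application of the same Ces\`aro-type lemma, with the identical weights, that the paper invokes later to deduce Corollary \ref{th:meanmod} from Zaharjuta's theorem; what it costs is the explicit $\eta$--$N$ bookkeeping, and what it buys is a sharper blockwise estimate and no need to track where the running maximum is achieved. Both proofs are complete.
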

\begin{proof} Notice that
\[1 \le \prod_{j =1}^{h_d-1} M_{j} \le (\max_{j =1,\dots,h_d-1}M_j)^{h_d}\quad\text{and}\quad
\frac{h_d}{l_d} = \frac{1}{d} \left(\frac{p+1}{p}\right).\]
Choose the smallest $j(d)$ such that 
\[\max_{j=1,\dots,h_d-1}M_j = \max_{h_{j(d)-1} \le j < h_{j(d)}} M_j.\]
Since it is a non-decreasing sequence of integers, $(j(d))_{d \ge 1}$ tends to $+\infty$ or is eventually constant.
We examine both cases. \par
If $j(d) \to \infty$, then, by \eqref{property2},
\[\lim_{d \to + \infty} (\max_{h_{j(d)-1}\le j < h_{j(d)}} M_j)^{1/j(d)} = 1,\]
hence, since $0 < j(d) \le d $,
    \[\lim_{d \to + \infty} (\max_{h_{j(d)-1} \le j < h_{j(d)}} M_j)^{1/d} = 1.\]
If $j(d)$ is eventually constant, the conclusion is even simpler since $(M_j)$ is then bounded and $h_d/l_d\to 0$.
\end{proof}
We now state our main theoretical result and a consequence. A similar result was obtained in  \cite{jkedrzejowski1992transfinite}  and  \cite{bloom1992polynomial} for Leja sequences. The method of proving Theorem \ref{thm:gettransdiam} is in fact not different from that of Leja sequences except that here we take into consideration the Edrei growth of the pseudo Leja sequence. Its proof is postponed to the next section where we will recall the technical points that we will need. 
\begin{theorem}\label{thm:gettransdiam}
Let $K \subset \C^p$ be a determining compact set. If $(\xi_j)_{j \ge 0} \subset K$ is a  pseudo-Leja sequence of Edrei growth $(M_i)_{j \ge 1}$, then 
	\begin{equation}\label{equ:11}
		\lim_{d \to + \infty} |\V(\xi_0, \dots, \xi_{h_d-1})|^{1/l_d} = D(K),
	\end{equation}
 where $D(K)$ denotes the multivariate transfinite diameter. 
\end{theorem}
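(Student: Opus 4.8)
The plan is to sandwich $|\V(\xi_0,\dots,\xi_{h_d-1})|$ between two quantities that, after extraction of $l_d$-th roots, both tend to $D(K)$, where I use that $D(K)=\lim_{d\to\infty}V_{h_d}(K)^{1/l_d}$ (existence of this limit being Zaharjuta's theorem and the very definition of the transfinite diameter). Write $d_j=|\V(\xi_0,\dots,\xi_{j-1})|$, so that $d_1=1$, all $d_j>0$ by \eqref{eq:pslejunis}, and $d_{h_d}$ is exactly the quantity to estimate. The upper bound is immediate: since the $\xi_i$ lie in $K$, the defining maximum gives $d_{h_d}\le V_{h_d}(K)$, hence $\limsup_{d}d_{h_d}^{1/l_d}\le D(K)$.

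The substance is the matching lower bound. I would introduce the polynomials $q_N(z)=\V(\xi_0,\dots,\xi_{N-1},z)$. Expanding the determinant along the last row shows $q_N\in\mathcal{P}_{N+1}$ with the coefficient of $e_N$ equal, up to sign, to $d_N$; thus $\{q_N\}_{N=0}^{h_d-1}$ arises from $\{e_N\}_{N=0}^{h_d-1}$ by a triangular change of basis whose determinant has modulus $\prod_{N=0}^{h_d-1}d_N$. Choosing $\eta_0,\dots,\eta_{h_d-1}\in K$ to achieve the maximum defining $V_{h_d}(K)$ in \eqref{eq:defVjK}, this change of basis yields
\[ \bigl|\det[q_N(\eta_M)]_{0\le M,N\le h_d-1}\bigr| = V_{h_d}(K)\prod_{N=0}^{h_d-1}d_N, \]
while Hadamard's inequality together with $|q_N(\eta_M)|\le\|q_N\|_K:=\max_{z\in K}|q_N(z)|$ bounds the same determinant by $(h_d)^{h_d/2}\prod_{N=0}^{h_d-1}\|q_N\|_K$. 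Combining,
\[ V_{h_d}(K)\prod_{N=0}^{h_d-1}d_N \le (h_d)^{h_d/2}\prod_{N=0}^{h_d-1}\|q_N\|_K. \]

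Now I feed in the pseudo Leja property. By \eqref{property1} one has $\|q_N\|_K\le M_N\,d_{N+1}$ for $N\ge1$, while $\|q_0\|_K=1=d_1$ (set $M_0=1$), so $\|q_N\|_K/d_N\le M_N\,d_{N+1}/d_N$ for all $N$. Dividing the last display by $\prod_N d_N$ and telescoping $\prod_{N=0}^{h_d-1}(d_{N+1}/d_N)=d_{h_d}$ gives
\[ V_{h_d}(K)\le (h_d)^{h_d/2}\Bigl(\prod_{N=1}^{h_d-1}M_N\Bigr)\,d_{h_d}. \]
Extracting $l_d$-th roots and letting $d\to\infty$: the factor $V_{h_d}(K)^{1/l_d}\to D(K)$; the factor $(h_d)^{h_d/(2l_d)}\to1$ since $h_d/l_d=\tfrac1d\cdot\tfrac{p+1}{p}\to0$ while $\log h_d=O(\log d)$; and $\bigl(\prod_{N=1}^{h_d-1}M_N\bigr)^{1/l_d}\to1$ by Lemma \ref{lem:3}. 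Hence $\liminf_d d_{h_d}^{1/l_d}\ge D(K)$, which with the upper bound yields \eqref{equ:11}.

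The only genuinely non-routine step is the change-of-basis identity combined with the Hadamard estimate, which is what converts the Fekete maximum $V_{h_d}(K)$ into a product of the sup-norms $\|q_N\|_K$ that the pseudo Leja inequality controls; everything afterwards is bookkeeping in which the Edrei growth is absorbed by Lemma \ref{lem:3} and the polynomial growth of $h_d$ against $l_d$. I expect the main care to be needed in checking that $(h_d)^{h_d/(2l_d)}\to1$ and, more generally, that the crude determinantal factor is sub-$l_d$-exponential (a coarser $h_d!$ bound from the Leibniz expansion would serve just as well in place of Hadamard).
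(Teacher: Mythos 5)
Your proof is correct, but it takes a genuinely different route from the paper's. The paper obtains the lower bound $L_{h_d}=|\V(\xi_0,\dots,\xi_{h_d-1})|\ge \big[\prod_j M_j\big]^{-1}\prod_j\tau_j^{|\kappa(j)|}$ by noting that each normalized $P_k$ is monic with leading term $e_k$, so $L_{k+1}/L_k\ge M_k^{-1}\|P_k\|_K\ge M_k^{-1}\tau_k^{|\kappa(k)|}$; it then invokes Zaharjuta's theorem in its strong form ($\mathcal{T}_d\to D(K)$ for the geometric means of Chebyshev constants) together with a Ces\`aro-type averaging corollary. You instead compare directly with a Fekete configuration: the triangular change of basis from $(e_N)$ to $(q_N)$ with diagonal moduli $d_N$ gives $|\det[q_N(\eta_M)]|=V_{h_d}(K)\prod_N d_N$, Hadamard bounds this by $h_d^{h_d/2}\prod_N\|q_N\|_K$, and the pseudo-Leja inequality plus telescoping converts $\prod_N\|q_N\|_K$ into $\big(\prod_N M_N\big)\,d_{h_d}\prod_N d_N$. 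Both arguments consume the Edrei growth through Lemma \ref{lem:3} and both ultimately rest on Zaharjuta's existence theorem, but yours needs only the bare statement that $V_{h_d}^{1/l_d}\to D(K)$, at the price of the harmless subexponential factor $h_d^{h_d/2}$ (your check that $h_d^{h_d/(2l_d)}\to 1$ is right, since $h_d/l_d\sim(p+1)/(pd)$ and $\log h_d=O(\log d)$). What the paper's Chebyshev route buys in exchange is the by-product asymptotics \eqref{eq:asympk} for $\prod_k\|P_k\|_K$, which the author then feeds into Bloom's result to get the Green-function statement of Theorem \ref{th:greenfunc}; your inequalities would in fact also yield \eqref{eq:asympk} after dividing by $\prod_N d_N$, though you did not note this. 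One cosmetic point: the coefficient of $e_N$ in $q_N$ is exactly $\V(\xi_0,\dots,\xi_{N-1})$ (the sign $(-1)^{2N}$ is $+1$), but since only moduli enter, "up to sign" is harmless.
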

Thanks to R. Berman and S. Boucksom outstanding result which first appeared in the preprint \cite[Theorem 1.1]{berman2008equidistribution} and later was presented as a consequence of a more general convergence result in  \cite[Theorem A]{berman2011fekete} with D. W. Nyström, we can derive the following.
\begin{theorem}\label{thm:equi_distribution}
    Let $K \subset \C^p$ be a determining compact set. If $(\xi_j)_{j \ge 0} \subset K$ is a  pseudo-Leja sequence, then the normalised counting measures $h_d^{-1} \sum_{j=0}^{h_d} [\xi_j]$ converge to the equilibrium measure of $K$ in the weak-* sense.
\end{theorem}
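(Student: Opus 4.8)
The plan is to reduce the statement to the Berman–Boucksom–Nyström equidistribution theorem \cite[Theorem A]{berman2011fekete}, using Theorem \ref{thm:gettransdiam} as the bridge. The key point is that this theorem applies not only to genuine Fekete points but to any array that is \emph{asymptotically Fekete}, and Theorem \ref{thm:gettransdiam} shows precisely that the initial segments of a pseudo-Leja sequence form such an array. Recall that, by the very definition of the transfinite diameter, $V_{h_d}(K)^{1/l_d} \to D(K)$, where $V_{h_d}(K)$ is the maximal modulus of the Vandermonde determinant over $K$ defined in \eqref{eq:defVjK}. A family of configurations $(\eta_0^{(d)}, \dots, \eta_{h_d-1}^{(d)}) \subset K$ is then called asymptotically Fekete when
\[
\lim_{d \to +\infty} |\V(\eta_0^{(d)}, \dots, \eta_{h_d-1}^{(d)})|^{1/l_d} = D(K),
\]
equivalently, when the ratio of its Vandermonde determinant to the extremal value $V_{h_d}(K)$, raised to the power $1/l_d$, tends to $1$.

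First I would take the pseudo-Leja sequence $(\xi_j)_{j \ge 0}$ and, for each $d$, form the configuration consisting of its first $h_d$ points $\xi_0, \dots, \xi_{h_d-1}$. Theorem \ref{thm:gettransdiam} states exactly that $|\V(\xi_0, \dots, \xi_{h_d-1})|^{1/l_d} \to D(K)$, so this family of configurations is asymptotically Fekete in the sense above. Next I would invoke \cite[Theorem A]{berman2011fekete}, which asserts that for such an array the normalised counting measures $h_d^{-1} \sum_{j=0}^{h_d-1} [\xi_j]$ converge weak-$*$ to the equilibrium measure $\mu_K$ of $K$. The extra Dirac mass appearing in the statement of the theorem is harmless: since $h_d^{-1}[\xi_{h_d}]$ carries mass tending to $0$, replacing $\sum_{j=0}^{h_d-1}$ by $\sum_{j=0}^{h_d}$ leaves the weak-$*$ limit unchanged.

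The main work, and the only genuine obstacle, lies in checking that the hypotheses of \cite[Theorem A]{berman2011fekete} are met. Two compatibility points must be verified. First, one must confirm that the normalisation of the Vandermonde determinant used in the Berman–Boucksom–Nyström asymptotically-Fekete condition coincides with the exponent $1/l_d$ appearing in Theorem \ref{thm:gettransdiam}; this rests on the elementary identity $l_d/(d\,h_d) = p/(p+1)$, which one reads off from $l_d = p\binom{p+d}{p+1}$ and $h_d = \binom{p+d}{p}$, so that the two normalisations are indeed equivalent. Second, the equilibrium measure and the equidistribution conclusion require $K$ to be non-pluripolar; this is implicit in the statement, since the conclusion is only meaningful when $D(K) > 0$, and a determining compact set with positive transfinite diameter falls within the scope of \cite{berman2011fekete}. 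Once these checks are in place, the convergence follows immediately from the cited theorem.
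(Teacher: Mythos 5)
Your proposal is correct and follows the same route as the paper: the paper obtains this theorem precisely by combining Theorem \ref{thm:gettransdiam} (which shows the initial segments $\{\xi_0,\dots,\xi_{h_d-1}\}$ are asymptotically Fekete) with the Berman--Boucksom--Nystr\"om equidistribution result \cite[Theorem A]{berman2011fekete}. Your additional checks on the normalisation $l_d/(d\,h_d)=p/(p+1)$ and on the negligible extra Dirac mass are sound and only make explicit what the paper leaves implicit.
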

Here $[\xi]$ refers to the Dirac measure at $\xi$ of total mass $1$.\\
Setting 
	\begin{equation}
		P_k(\xi)= \dfrac{ \V(\xi_0, \dots, \xi_{k-1}, \xi)}{\V(\xi_0, \dots, \xi_{k-1})}, \label{eq:defPk}
	\end{equation}
we will see that 
\begin{equation}
\lim_{d\to\infty} \big(\prod_{k =0}^{h_d-1} \|P_k\|_K \big)^{1/l_d} = D(K), \label{eq:asympk}
\end{equation} 
and, by a strong result of Bloom (\cite[Corollary 4.5]{bloom2001families}), we will obtain the following. 
\begin{theorem} \label{th:greenfunc}
Let $K$ be a polynomially convex regular compact subset of $\C^p$. Then
	\begin{equation}
		G_K(z) = \limsup_{N \to + \infty} \frac{1}{|\kappa(N)|} \log \left(\dfrac{|P_N(z)|}{\|P_N\|_K}\right), \quad z \in \C^p\setminus K,
	\end{equation}
where $G_K$ denotes the pluri-complex Green function of $K$.
\end{theorem}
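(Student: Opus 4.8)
The plan is to derive the announced formula in two moves: first to prove the auxiliary asymptotic \eqref{eq:asympk}, and then to feed it into Bloom's theorem \cite[Corollary 4.5]{bloom2001families}, which is the real engine behind the statement.

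First I would record the structure of the polynomials $P_k$ from \eqref{eq:defPk}. Expanding the Vandermonde determinant $\V(\xi_0,\dots,\xi_{k-1},\xi)$ along the row carrying the free variable $\xi$ shows that, as a polynomial in $\xi$, $P_k$ has leading monomial $e_k$ with coefficient $1$; hence $P_k\in\mathcal P_{k+1}$ is monic of total degree $|\kappa(k)|$ and vanishes at $\xi_0,\dots,\xi_{k-1}$. From \eqref{eq:defPk} one also reads
\[
\|P_k\|_K=\frac{\max_{z\in K}|\V(\xi_0,\dots,\xi_{k-1},z)|}{|\V(\xi_0,\dots,\xi_{k-1})|},\qquad |P_k(\xi_k)|=\frac{|\V(\xi_0,\dots,\xi_{k-1},\xi_k)|}{|\V(\xi_0,\dots,\xi_{k-1})|}.
\]
Since $\max_{z\in K}|\V(\xi_0,\dots,\xi_{k-1},z)|\ge|\V(\xi_0,\dots,\xi_{k-1},\xi_k)|$ trivially, while the pseudo Leja inequality \eqref{property1} bounds the same quantity above by $M_k|\V(\xi_0,\dots,\xi_{k-1},\xi_k)|$, I obtain the sandwich
\[
|P_k(\xi_k)|\le\|P_k\|_K\le M_k\,|P_k(\xi_k)|,\qquad k\ge 1,
\]
the case $k=0$ being trivial with $P_0\equiv 1$ and $M_0=1$.

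Next I would telescope. Because $\V(\xi_0)=1$,
\[
\prod_{k=0}^{h_d-1}|P_k(\xi_k)|=\prod_{k=0}^{h_d-1}\frac{|\V(\xi_0,\dots,\xi_k)|}{|\V(\xi_0,\dots,\xi_{k-1})|}=|\V(\xi_0,\dots,\xi_{h_d-1})|,
\]
so multiplying the sandwich over $0\le k\le h_d-1$ gives
\[
|\V(\xi_0,\dots,\xi_{h_d-1})|\le\prod_{k=0}^{h_d-1}\|P_k\|_K\le\Big(\prod_{k=1}^{h_d-1}M_k\Big)\,|\V(\xi_0,\dots,\xi_{h_d-1})|.
\]
Raising to the power $1/l_d$ and letting $d\to\infty$, both outer members tend to $D(K)$: the Vandermonde factor by Theorem \ref{thm:gettransdiam}, and the correction $(\prod_{k=1}^{h_d-1}M_k)^{1/l_d}\to 1$ by Lemma \ref{lem:3}. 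The squeeze theorem then yields \eqref{eq:asympk}.

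Finally I would invoke Bloom's result. The family $(P_N)$ consists of monic polynomials with leading monomials $e_N$ and degrees $|\kappa(N)|\to\infty$ satisfying the asymptotically Fekete condition \eqref{eq:asympk}; since $K$ is polynomially convex and regular, its Siciak--Zaharjuta extremal function $V_K$ is continuous and agrees with the pluri-complex Green function $G_K$ on $\C^p\setminus K$. Under precisely these hypotheses \cite[Corollary 4.5]{bloom2001families} asserts that $V_K(z)=\limsup_{N\to\infty}|\kappa(N)|^{-1}\log(|P_N(z)|/\|P_N\|_K)$, which is the claimed formula. I expect the only delicate step to be this last one: all the genuine pluripotential-theoretic content is Bloom's, so the burden on our side reduces to producing \eqref{eq:asympk} and to checking that the normalisation of the $P_N$ (monicity, degree, and the geometric-mean hypothesis) is exactly the one his corollary demands.
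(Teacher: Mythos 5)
Your argument is correct and takes essentially the same route as the paper: prove \eqref{eq:asympk} by sandwiching $\prod_{k}\|P_k\|_K$ between quantities whose $l_d$-th roots tend to $D(K)$ (using the pseudo Leja inequality and Lemma \ref{lem:3}), then feed this into Bloom's \cite[Corollary 4.5]{bloom2001families}. The only immaterial difference is that your squeeze uses the telescoped product $L_{h_d}=|\V(\xi_0,\dots,\xi_{h_d-1})|$ on both sides together with Theorem \ref{thm:gettransdiam}, whereas the paper bounds below by the Chebyshev products $\prod_i\mathcal{T}_i^{i(h_i-h_{i-1})}$ and above via $V_{h_d}$.
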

For the notions of pluricomplex potential theory used  in the above statement (regularity, Green function, equilibrium measure), we refer the reader to \cite{klimek1991pluripotential}.
\par Observe that in the case where $K$ would be non determining, a pseudo Leja sequence would be eventually arbitrary (as soon as $V_j(K)=0$), and Theorem \ref{thm:gettransdiam} would be trivially true (with $D(K)=0$).   
\section{Transfinite diameter and proof of Theorem \ref{thm:gettransdiam}}\label{sec3}
We will recall in details the objects and results related to the transfinite diameter. We follow the presentation given in \cite{bloom1992polynomial}. As usual, $K$ denotes a determining compact set in $\C^p$. We will write $V_j$ instead of $V_j(K)$, see \eqref{eq:defVjK}, when there is no ambiguity. We define the $d$-th order transfinite diameter of $K$, denoted by $D_d(K)=D_d$, as
\[D_d =  V_{h_d}^{1/l_d}.\]
Fekete proved in \cite{fekete1923verteilung} that the limit (as $d\to \infty)$ of the sequence $D_d$, denoted as $D(K)$, exists for any compact set $K \subset \C$.  Later in \cite{leja1959problemes}, Leja introduced the name \textit{transfinite diameter} and posed the problem of its existence in the multivariate case. A positive answer to his problem was given by Zaharjuta in the celebrated paper \cite{zaharjuta1975transfinite} where it also shown a remarkable connection with some certain polynomials of minimal norm that we will now describe. 
\par
A monic polynomial is a polynomial whose leading term with respect to $\prec$ is equal to $1$. Thus a monic polynomial of leading term $e_i$ is of the form
	\begin{equation}\label{eq:defPfor Tau}
		P(z)= e_i(z) + \sum_{0 \le j < i} c_j e_j(z): \ c_j \in \C\}, \quad i \in \N.
	\end{equation}
We will indicate such a form in writing $\textbf{lead}(P)=e_i$.  
We define the \textbf{$i$-th Chebyshev constant} $\tau_i$ of $K$ as
\begin{equation}\label{eq:taui}
\tau_i= \inf \left\{\|P\|_K^{1/|\kappa(i)|}\;:\;\textbf{lead}(P)=e_i \right\}.
\end{equation}
Given $\theta = (\theta_1, \dots, \theta_p)$ in the standard $p$-simplex $\Sigma$,
\[\Sigma = \big\{ \theta \in \R^n\,:\, \sum_{i=1}^p \theta_i = 1,\; \theta_i \ge 0,\; i = 1, \dots, p \big\},\]
	the limit \begin{equation}
				\tau(K, \theta) = \limsup_{j \to + \infty, \frac{\kappa(j)}{|\kappa(j)|} \to \theta} \tau_j
			\end{equation}
is called the \textbf{directional Chebyshev constant} of $K$ in the $\theta$-direction.
Now, if we denote by $\mathcal{T}_d$, the geometric mean of Chebyshev constants for a given degree,
		\begin{equation}\label{eq:defTi}
			\mathcal{T}_d = \big[ \prod_{|\kappa(i)|= d} \tau_i\big]^{1/(h_d - h_{d-1})},
		\end{equation}
the fundamental result of Zaharjuta can be stated as follows.
\begin{theorem}\cite[Theorem 1]{zaharjuta1975transfinite} \label{th:zaha}The sequence $\mathcal{T}_d$ converges as $d\to \infty$ and we have
		\begin{equation}
			\lim_{d \to \infty} \mathcal{T}_d = \exp\left[\frac{1}{meas(\Sigma)} \int_\Sigma \log(\tau(K, \theta))\ d\theta\right] = D(K)
		\end{equation}
  where $\text{meas}(\Sigma)$ denotes the standard volume of the simplex. 
\end{theorem}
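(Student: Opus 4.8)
The plan is to recover Theorem~\ref{th:zaha} in its two halves: first that $\mathcal{T}_d$ converges to the simplex average of $\log\tau(K,\cdot)$, and then that this common value equals $D(K)=\lim_d V_{h_d}^{1/l_d}$. The engine behind the first half is submultiplicativity of the Chebyshev data. Writing $t(\alpha)=\inf\{\|P\|_K:\textbf{lead}(P)=e_{\kappa^{-1}(\alpha)}\}$, the product of a monic polynomial with leading term $e_{\kappa^{-1}(\alpha)}$ and one with leading term $e_{\kappa^{-1}(\beta)}$ is again monic with leading term $e_{\kappa^{-1}(\alpha+\beta)}$, since the graded lexicographic order is multiplicative on leading monomials; hence $t(\alpha+\beta)\le t(\alpha)t(\beta)$, so $\log t$ is subadditive on $\N^p$. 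A multivariate Fekete lemma then shows that along each rational direction $\lim t(\alpha)^{1/|\alpha|}$ exists as $\alpha/|\alpha|\to\theta$, which upgrades the $\limsup$ in the definition of $\tau(K,\theta)$ to a genuine limit. First I would push this through to identify $\alpha\mapsto |\alpha|\log\tau(K,\alpha/|\alpha|)$ as the sublinear (convex positively homogeneous) envelope of the subadditive data, so that $\theta\mapsto\log\tau(K,\theta)$ is convex, hence continuous on the interior of $\Sigma$ and integrable there.

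With the directional constants in hand, $\lim_d\mathcal{T}_d=\exp[\mathrm{meas}(\Sigma)^{-1}\int_\Sigma\log\tau(K,\theta)\,d\theta]$ becomes a Riemann-sum statement. Writing $\log\mathcal{T}_d=(h_d-h_{d-1})^{-1}\sum_{|\kappa(i)|=d}\frac1d\log t(\kappa(i))$, I would observe that the normalized indices $\kappa(i)/d$ with $|\kappa(i)|=d$ equidistribute over $\Sigma$, that $h_d-h_{d-1}$ is exactly their number, and that $\frac1d\log t(\kappa(i))\to\log\tau(K,\kappa(i)/d)$ with control uniform on compact subsets of the interior. Convexity of $\log\tau(K,\cdot)$ supplies the uniform integrability needed to absorb the contribution of indices near $\partial\Sigma$, so the sum converges to the average of $\log\tau(K,\cdot)$ over $\Sigma$.

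It remains to identify this limit with $D(K)$, a comparison between the Chebyshev product and the maximal Vandermonde determinant. Column operations replacing each monomial column $e_i$ by a near-optimal monic polynomial $Q_i$ with $\textbf{lead}(Q_i)=e_i$ leave $|\V|$ unchanged, so Hadamard's inequality gives $V_{h_d}\le (h_d)^{h_d/2}\prod_{i=0}^{h_d-1}\|Q_i\|_K\le (h_d)^{h_d/2}\prod_{i=0}^{h_d-1}(1+\varepsilon)\,\tau_i^{|\kappa(i)|}$; since $\log(h_d)^{h_d/2}=o(l_d)$, taking $l_d$-th roots and letting $\varepsilon\to0$ yields $D(K)\le\lim_d\big(\prod_{i=0}^{h_d-1}\tau_i^{|\kappa(i)|}\big)^{1/l_d}$. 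For the reverse inequality I would take Fekete points $\xi_0,\dots,\xi_{h_d-1}$ realizing $V_{h_d}$ and telescope $V_{h_d}=\prod_{i=1}^{h_d-1}|P_i(\xi_i)|$ with $P_i(z)=\V(\xi_0,\dots,\xi_{i-1},z)/\V(\xi_0,\dots,\xi_{i-1})$ monic of leading term $e_i$; bounding $\|P_i\|_K$ from above (the maximality of the Fekete configuration keeps each $\|P_i\|_K$ within a sub-exponential factor of $|P_i(\xi_i)|$) and using $\tau_i^{|\kappa(i)|}\le\|P_i\|_K$ gives $\prod_i\tau_i^{|\kappa(i)|}\le (\text{sub-exp})\,V_{h_d}$, hence $\lim_d(\prod_i\tau_i^{|\kappa(i)|})^{1/l_d}\le D(K)$.

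Finally a Cesàro/Stolz step converts the cumulative degree-weighted product into the degree-wise geometric mean: since $\frac1{l_d}\sum_{i=0}^{h_d-1}|\kappa(i)|\log\tau_i=\frac1{l_d}\sum_{m=1}^d m(h_m-h_{m-1})\log\mathcal{T}_m$ is a weighted average of the $\log\mathcal{T}_m$ with weights $m(h_m-h_{m-1})$ summing to $l_d$ and concentrated in the tail, convergence of $\mathcal{T}_m$ forces $\lim_d(\prod_i\tau_i^{|\kappa(i)|})^{1/l_d}=\lim_d\mathcal{T}_d$. Chaining the three displays gives $\lim_d\mathcal{T}_d=D(K)$. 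The hard part will be the first and third paragraphs in tandem: establishing that the subadditive limit is a genuine (not merely upper) limit, with enough uniformity and the right convexity to justify the Riemann sum, and controlling $\|P_i\|_K$ for the Fekete telescoping polynomials in the reverse Vandermonde estimate—these are precisely the two points where a naive argument fails and where Zaharjuta's original work concentrates its effort.
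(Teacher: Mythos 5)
The paper offers no proof of this statement: it is quoted verbatim from Zaharjuta's 1975 paper, so your proposal must be measured against Zaharjuta's original argument. Your first, second and fourth paragraphs are faithful to that argument: submultiplicativity $t(\alpha+\beta)\le t(\alpha)t(\beta)$ (valid because grlex is a monomial order, so products of monic polynomials are monic with leading terms multiplying), Fekete's subadditive lemma along rational rays, convexity and hence interior continuity of $\log\tau(K,\cdot)$, the equidistribution of the normalized multi-indices $\kappa(i)/d$ over $\Sigma$, and the final Cesàro step with weights $r_j=j(h_j-h_{j-1})$, $\sum_{j\le d} r_j=l_d$, which is precisely the ``mean modification'' the paper records separately as Corollary \ref{th:meanmod}. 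Your Hadamard bound for the direction $V_{h_d}\le (h_d)^{h_d/2}\prod_i\|Q_i\|_K$ is also sound, since $\tfrac{h_d}{2}\log h_d = o(l_d)$.

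There is, however, a genuine gap in your third paragraph. For the reverse inequality you telescope a \emph{Fekete} configuration and assert that ``the maximality of the Fekete configuration keeps each $\|P_i\|_K$ within a sub-exponential factor of $|P_i(\xi_i)|$.'' Fekete maximality is a property of the full array of $h_d$ points: it controls the full-degree fundamental Lagrange polynomials ($\|l_j^{(h_d-1)}\|_K\le 1$), but it says nothing about the initial sections $(\xi_0,\dots,\xi_{i-1})$, which depend on an arbitrary ordering of the Fekete points and can be nearly degenerate; the intermediate Vandermonde $\V(\xi_0,\dots,\xi_{i-1})$ in the denominator of $P_i$ admits no lower bound from maximality of $V_{h_d}$ alone, so the ratio $\|P_i\|_K/|P_i(\xi_i)|$ is not controlled, sub-exponentially or otherwise, and the step fails as written. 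The standard repair — and the mechanism Zaharjuta actually uses, which is also exactly the inequality chain \eqref{equ7}--\eqref{equ:1} exploited in this paper's proof of Theorem \ref{thm:gettransdiam} — is to abandon Fekete points here and choose the configuration greedily over $K$ (a Leja section): then $|P_i(\xi_i)|=\|P_i\|_K\ge\tau_i^{|\kappa(i)|}$ holds by construction at every step, giving $V_{h_d}\ge L_{h_d}\ge\prod_{i=1}^{h_d-1}\tau_i^{|\kappa(i)|}$ exactly, with no fudge factor at all. With that substitution your outline closes; the remaining work is the uniformity you already flagged in upgrading the directional $\limsup$ to a limit for interior $\theta$ and in controlling indices near $\partial\Sigma$ (note that once $\log\tau(K,\cdot)$ is convex and finite on the open simplex, a supporting affine minorant bounds it below there, which simplifies the boundary estimate you attribute to uniform integrability).
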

We will need a ``mean modification" of the above. 
\begin{corollary}\label{th:meanmod}
We have \begin{equation}
			\lim_{d\to\infty} \big[\prod_{j =1}^d \mathcal{T}_j^{r_j}\big]^{1/l_d} = D(K) \label{equ:3.17}
		\end{equation} where $r_j = j(h_j - h_{j-1})$ for $j\ge 1$.
\end{corollary}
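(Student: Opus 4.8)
The plan is to take logarithms and read the quantity in \eqref{equ:3.17} as a weighted arithmetic mean. Writing $L_d = \frac{1}{l_d}\sum_{j=1}^d r_j \log \mathcal{T}_j$, the claim \eqref{equ:3.17} is equivalent, in the case $D(K)>0$, to $L_d \to \log D(K)$. The structural fact that makes this work is that the weights are probability weights: by the very definition of $l_d$ in \eqref{eq:ld} we have $l_d = \sum_{i=1}^d i(h_i - h_{i-1}) = \sum_{j=1}^d r_j$, so $\sum_{j=1}^d (r_j/l_d) = 1$. By Theorem \ref{th:zaha} the sequence $\log \mathcal{T}_j$ converges to $\log D(K)$, and the statement is then exactly an instance of regular (Toeplitz) summability: a weighted average of a convergent sequence tends to the same limit, provided each individual weight $r_j/l_d$ tends to $0$ as $d\to\infty$. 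This last point holds because $r_j$ is fixed for fixed $j$ while $l_d = p\binom{p+d}{p+1}\to\infty$.

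I would then carry out the standard head-versus-tail estimate. Given $\varepsilon>0$, use Theorem \ref{th:zaha} to pick $J$ with $|\log\mathcal{T}_j - \log D(K)|<\varepsilon$ for all $j>J$. Subtracting $\log D(K)$ inside the average via $\sum_{j=1}^d r_j = l_d$, one obtains
\[
\big| L_d - \log D(K)\big| \le \frac{1}{l_d}\sum_{j=1}^{J} r_j\,\big|\log\mathcal{T}_j - \log D(K)\big| + \frac{1}{l_d}\sum_{j=J+1}^{d} r_j\,\big|\log\mathcal{T}_j - \log D(K)\big|.
\]
The second sum is at most $\varepsilon\, l_d^{-1}\sum_{j=J+1}^d r_j \le \varepsilon$, while the first sum has a numerator independent of $d$ and is therefore $O(1/l_d)\to 0$. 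Hence $\limsup_{d\to\infty}|L_d - \log D(K)| \le \varepsilon$, and letting $\varepsilon\downarrow 0$ yields the claim.

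The only genuine subtlety, and the point I would treat with care, is the well-definedness of the logarithms, i.e.\ the positivity of $D(K)$ and of the $\mathcal{T}_j$; a determining compact set may still be pluripolar, in which case $D(K)=0$. When $D(K)>0$ the argument above is clean. If $D(K)=0$ I would argue multiplicatively instead: fixing $0<\varepsilon<1$ and $J$ so that $\mathcal{T}_j<\varepsilon$ for $j>J$, one bounds
\[
\Big[\prod_{j=1}^d \mathcal{T}_j^{r_j}\Big]^{1/l_d} \le \Big(\prod_{j=1}^{J}\mathcal{T}_j^{r_j}\Big)^{1/l_d}\cdot \varepsilon^{\,l_d^{-1}\sum_{j=J+1}^d r_j},
\]
where the first factor tends to $1$ (fixed base, exponent $\to 0$; the bound is trivial if some $\mathcal{T}_j$ vanishes) and the exponent of $\varepsilon$ tends to $1$, so the $\limsup$ is at most $\varepsilon$, giving convergence to $0=D(K)$. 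Apart from this case distinction the proof is a routine weighted-mean (Stolz--Cesàro/Toeplitz) argument, and I expect no serious obstacle.
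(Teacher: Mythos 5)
Your proof is correct and follows essentially the same route as the paper: the paper also deduces the corollary from Theorem \ref{th:zaha} via the Cesàro/Toeplitz lemma applied to $u_j=\log\mathcal{T}_j$ with weights $\theta_{d,j}=r_j/l_d$, which sum to $1$ since $l_d=\sum_{j=1}^d r_j$. Your explicit handling of the degenerate case $D(K)=0$ (or some $\mathcal{T}_j=0$) is a point the paper passes over silently, but it does not change the substance of the argument.
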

The corollary follows from Theorem \ref{th:zaha} by applying the following classical lemma of Cesaro type (for which we omit the proof) with $u_d = \log(\mathcal{T}_d)$ and $\theta_{d,j} = r_j/l_d$.
\begin{lemma}
Let $(u_d)$ be a sequence of real numbers converging to $A$. Consider an array of non negative
numbers $\theta_{d,j}$ , $d,j \in \N^\star$ verifying \[ \lim_{d \to + \infty} \max_{j = 1. \dots, d}\theta_{d,j} =0\quad\text{and}\quad \sum_{j=1}^d \theta_{d,j} = 1, \; d\in \N.\] Then the sequence \[s_d = \sum_{j =1}^d \theta_{d,j} u_j,\ d \in \N,\] also converges to $A$.
\end{lemma}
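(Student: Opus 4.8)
The plan is to recognize this as the standard regularity criterion for a Toeplitz summability matrix (a special case of the Silverman--Toeplitz theorem) and to run the classical two-piece $\varepsilon$-argument. First I would normalize by subtracting the limit: set $v_j = u_j - A$, so that $v_j \to 0$, and use the row-sum hypothesis $\sum_{j=1}^d \theta_{d,j} = 1$ to write
\[
s_d - A = \sum_{j=1}^d \theta_{d,j}(u_j - A) = \sum_{j=1}^d \theta_{d,j} v_j.
\]
It then suffices to prove that this last quantity tends to $0$.

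Fix $\varepsilon > 0$. Since $v_j \to 0$, there is an index $J$, depending only on $\varepsilon$, such that $|v_j| < \varepsilon$ for all $j > J$. For $d > J$ I would split the sum at $J$ and estimate the tail using only the non-negativity of the $\theta_{d,j}$ together with the row-sum condition:
\[
\sum_{j=J+1}^d \theta_{d,j} |v_j| \le \varepsilon \sum_{j=J+1}^d \theta_{d,j} \le \varepsilon \sum_{j=1}^d \theta_{d,j} = \varepsilon.
\]

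The head is where the hypothesis $\max_{1 \le j \le d}\theta_{d,j} \to 0$ enters. Writing $C = \max_{1 \le j \le J}|v_j|$, which is a finite constant independent of $d$, I bound
\[
\sum_{j=1}^J \theta_{d,j} |v_j| \le C \sum_{j=1}^J \theta_{d,j} \le C\, J \max_{1 \le j \le d} \theta_{d,j}.
\]
Since $J$ and $C$ are now frozen, the right-hand side tends to $0$ as $d \to \infty$, so there is a threshold $d_0$ for which this head is $< \varepsilon$ whenever $d > d_0$. Combining the two estimates gives $|s_d - A| < 2\varepsilon$ for $d > \max(J, d_0)$, and since $\varepsilon$ was arbitrary this yields $s_d \to A$.

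The argument has no genuine obstacle; the only point requiring care is the order of the quantifiers. One must first fix $J$ (hence the constant $C$) from the tail estimate before sending $d \to \infty$ in the head estimate, so that the uniform smallness $\max_{1 \le j \le d}\theta_{d,j} \to 0$ can absorb the fixed number $J$ of initial terms. This is precisely the role played by replacing the usual columnwise convergence $\theta_{d,j} \to 0$ (for each fixed $j$) of the Silverman--Toeplitz theorem by the stronger uniform condition assumed here.
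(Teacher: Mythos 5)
Your proof is correct and complete: the normalization $s_d - A = \sum_j \theta_{d,j}(u_j - A)$, the split at a fixed index $J$, and the head/tail estimates are exactly the standard Toeplitz-type argument, with the quantifiers handled in the right order. The paper explicitly omits the proof of this lemma, so there is nothing to compare against, but your argument is the intended one and fills the gap correctly.
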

We have now assembled the material required in our proof of Theorem \ref{thm:gettransdiam}.
\begin{proof}[Proof of Theorem \ref{thm:gettransdiam}]
Set $L_k = |\V(\xi_0, \dots, \xi_{k-1})|$ for all
$k \ge 1$. In view of \eqref{eq:defVjK} and \eqref{eq:pslejunis}, we have 
\begin{equation}\label{eq:lksmallerVk} 0< L_k \le V_k, \quad  k\in \N.\end{equation} 
Recall that the polynomial $P_k$ defined in \eqref{eq:defPk} is given by
\begin{equation}
P_k(\xi)=\dfrac{ \V(\xi_0, \dots, \xi_{k-1}, \xi)}{\V(\xi_0, \dots, \xi_{k-1})} = e_{k}(\xi) + \sum_{0\le j< k} c_je_j(\xi), 
	\end{equation}
 Observe that
 \[P_k\in \mathcal{P}^k, \quad \textbf{lead}{P_k}=e_k, \quad \deg(P_k)=\deg(e_k)=\kappa(k).\]
By definition of a pseudo Leja sequence, we have
 \[|P_k(\xi_{k})| = \dfrac{L_{k+1}}{L_{k}} \ge  \dfrac{M_{k}^{-1} \max_{\xi\in K} \V(\xi_0, \dots, \xi_{k-1}, \xi)}{ \V(\xi_0, \dots, \xi_{k-1})}\ge M_{k}^{-1}\|P_k\|_K.\]
Hence by definition of $\tau_k$, see \eqref{eq:taui}, we have 
\begin{equation}
\frac{L_{k+1}}{L_{k}} \ge M_{k}^{-1} \|P_k\|_K \ge M_{k}^{-1} \tau_k^{|\kappa(k)|}. \label{equ7}
\end{equation}
By using \eqref{eq:lksmallerVk} and iterating the above inequality, we deduce that, for $d \ge 0$ 
	\begin{align}
		V_{h_d} &\ge L_{h_d} = \frac{L_{h_d}}{L_{h_{d}-1}} \times \frac{L_{h_{d}-1}}{L_{h_{d}-2}} 
  \times \dots \times \frac{L_2}{L_1} \\
		&\ge \Big[\prod_{j=1}^{h_d-1}M_{j}\Big]^{-1} \Big[\prod_{j=1}^{h_d-1} \tau_j^{|\kappa(j)|}\Big] \label{equ:1}.
	\end{align}
We may re-arrange the terms in the last product above taking into account the definition of $\mathcal{T}_i$ in \eqref{eq:defTi} as follows : 
\begin{equation*}
    \prod_{j=1}^{h_d-1}\tau_j^{[\kappa(j)|} = \prod_{i =1}^d \Big[\prod_{j = h_{i-1}}^{h_i-1} \tau_j \Big]^i = \prod_{i =1}^d\mathcal{T}_i^{i(h_i-h_{i-1})}.
\end{equation*}
 Now, taking the $l_d$-roots in \eqref{equ:1}, we obtain
	\begin{equation}\label{equ:2}
		(V_{h_d})^{1/l_d} \ge (L_{h_d})^{1/l_d} \ge \Big[\prod_{j=1}^{h_d-1}M_{j} \Big]^{-1/l_d} \Big[\prod_{i =1}^d\mathcal{T}_i^{i(h_i-h_{i-1})} \Big]^{1/l_d}. 
	\end{equation}
Now, passing to the limit and using Corollary \ref{th:meanmod} and Lemma \ref{lem:3}, we obtain
	\begin{equation*}
		\lim_{d\to\infty} (L_{h_d})^{1/l_d} = D(K). \qedhere
	\end{equation*}
\end{proof}
As to prove Relation \eqref{eq:asympk}, we proceed as follows.  
From  Inequalities \eqref{equ7} and \eqref{equ:2}, we get  
\begin{equation*}
\Big[\prod_{j=1}^{h_d-1}M_{j} \Big] \Big[\prod_{i=1}^d\mathcal{T}_i^{i(h_i-h_{i-1})} \Big]\le \prod_{k=0}^{h_d-1} \|P_k\|_K \le V_{h_d}.
\end{equation*}
Then, we take the  $l_d-$roots and pass to the limit as $d\to\infty$ which leads to \eqref{eq:asympk}. 

\begin{remark}
    Due to inequalities \eqref{eq:lksmallerVk} and \eqref{equ7} from the above proof, we obtain a stronger version of Theorem \ref{thm:gettransdiam}, namely
    \begin{equation}
        D(K) = \lim_{k \to +\infty} L_k^{1/l_{d(k)}}, \quad \text{where } d(k) = |\kappa(k)|.
    \end{equation}
\end{remark}
\section{Intertwining pseudo Leja sequences}\label{intertwinning}
We show how to construct a pseudo Leja sequence on $\C^{p}=\C^{p_1}\times \C^{p_2}$ out of two pseudo Leja sequences on $\C^{p_i}$, $i=1,2$. To this aim, we use a natural process introduced by Calvi that generalises a method introduced by Biermann in the case $p_i=1$, see \cite{calvi2005intertwining}. As indicated in the introduction, in this section, when necessary, we use a superscript to indicate the ground spaces for the related object, so that $e_j^{(p)}$ denotes the $j$-th monomial on $\C^p$ and so on.
\par 
We will use
\begin{equation}
\varphi: \N \ni j \longmapsto \varphi(j) = \big(\varphi_1(j), \varphi_2(j)\big) \in \N^2
\end{equation}
satisfying, see the introduction for the definition of $\kappa$, 
\begin{equation}
    \kappa^{(p)}(j) = \left(\kappa^{(p_1)}\big(\varphi_1(j)\big), \kappa^{(p_2)}\big(\varphi_2(j)\big)\right) \label{def_of_phi}, \quad j\in \N.
\end{equation}
\begin{definition} Let $A = (a_j)_{j\in \N} \subset \C^{p_1}$ and $B = (b_j)_{j\in \N} \subset \C^{p_2}$ two sequences of points. The \textbf{intertwining sequence} of $A$ and $B$ is the sequence $\Omega= (\omega_j)_{j\in \N} $ of points in $\C^p$, $p=p_1+p_2$, denoted by $A\oplus B$ and defined by  
\begin{equation}
\omega_j = \big(a_{\varphi_1(j)}, b_{\varphi_2(j)})\big), \quad j\in \N. \label{def_intertwining}
\end{equation}
\end{definition}
The important point for our purpose is that we can easily express the vandermondian for an intertwining sequence $\Omega=A\oplus B$ in terms of the factor sequences $A$ and $B$. This is specified below. 
\par
We say that $X = (x_0, \dots, x_{N-1})$ is \textbf{completely ordered unisolvent} if 
\begin{equation}
    \V(x_0, \dots, x_{i-1}) \not = 0, \quad \text{for all } i=1, \dots, N.
\end{equation}
Every section $(\xi_j:\ j=0,\dots, N-1)$ of a pseudo Leja sequence $(\xi_j)_{j\in \N}$ is completely ordered unisolvent (the compact sets being assumed determining). In view of \cite[Theorem 3.3]{calvi2005intertwining}, the notion of completely ordered unisolvent array is stable by intertwining.
\par 
Given a sequence $X = (x_j)_{j \in \N}$, we set $X_j = (x_0, \dots, x_{j-1})$. The following result is a consequence of \cite[Proposition 2.5]{calvi2005intertwining}.
\begin{proposition}\label{prop:vdm_completely_ordered_unisolvent}
Let $X = (x_0, \dots, x_{N-1})$ be a completely ordered unisolvent array. We have
\begin{equation}
        \V(x_0, \dots, x_{N-1}) = \prod_{j=0}^{N-1}(e_j - L_{X_j}e_j)(x_j)
\end{equation}
where $L_{X_j}$ is the Lagrange interpolation operator at the points $X_j = (x_0, \dots, x_{j-1})$ with the convention that $L_{X_0}f =0$ and $e_j$ is the $j$-th monomial, see \eqref{equ:12}. 
\end{proposition}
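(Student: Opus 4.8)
The plan is to establish the product formula
\[
\V(x_0,\dots,x_{N-1}) = \prod_{j=0}^{N-1}(e_j - L_{X_j}e_j)(x_j)
\]
by induction on $N$, using the cofactor structure of the Vandermonde determinant. The natural inductive step relates $\V(x_0,\dots,x_{N-1})$ to $\V(x_0,\dots,x_{N-2})$ by expanding along the last row (the row indexed by $x_{N-1}$) or, more conveniently, by a column operation that isolates the contribution of the new monomial $e_{N-1}$. The key algebraic identity I would exploit is that the last factor $(e_{N-1}-L_{X_{N-1}}e_{N-1})(x_{N-1})$ is, up to the factor $\V(x_0,\dots,x_{N-2})$, precisely the expression one obtains from the FLIP/Cramer structure in \eqref{eq:flip}. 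Indeed, by definition of the Lagrange interpolation operator, $e_{N-1}-L_{X_{N-1}}e_{N-1}$ is the unique polynomial in $\mathcal{P}_N$ with leading term $e_{N-1}$ that vanishes at $x_0,\dots,x_{N-2}$; evaluated via \eqref{eq:flip} this gives a ratio of Vandermonde determinants.

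Concretely, first I would record the base case: $N=1$ gives $\V(x_0)=1=(e_0-L_{X_0}e_0)(x_0)$ since $e_0\equiv 1$ and $L_{X_0}f=0$ by convention. Next, for the inductive step, I would observe that the monic polynomial $P_{N-1}$ of \eqref{eq:defPk}, namely
\[
P_{N-1}(z)=\frac{\V(x_0,\dots,x_{N-2},z)}{\V(x_0,\dots,x_{N-2})},
\]
has leading term $e_{N-1}$ and vanishes at $x_0,\dots,x_{N-2}$ (the numerator has a repeated point), so $P_{N-1}=e_{N-1}-L_{X_{N-1}}e_{N-1}$ by uniqueness of the interpolant in $\mathcal{P}_N$. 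Evaluating at $x_{N-1}$ yields
\[
\V(x_0,\dots,x_{N-1}) = \V(x_0,\dots,x_{N-2})\,\bigl(e_{N-1}-L_{X_{N-1}}e_{N-1}\bigr)(x_{N-1}).
\]
Combining this with the inductive hypothesis applied to $\V(x_0,\dots,x_{N-2})$ closes the induction. The complete ordered unisolvence hypothesis is exactly what guarantees every denominator $\V(x_0,\dots,x_{i-1})$ is nonzero, so that each $P_i$ and each $L_{X_i}$ is well defined throughout.

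The main obstacle, and the step deserving the most care, is the uniqueness argument identifying $P_{N-1}$ with $e_{N-1}-L_{X_{N-1}}e_{N-1}$. One must verify that both are elements of $\mathcal{P}_N$ with the same leading coefficient (equal to $1$ on $e_{N-1}$) that agree at the $N-1$ interpolation nodes $x_0,\dots,x_{N-2}$, and then invoke that interpolation in $\mathcal{P}_N$ at these nodes is uniquely determined once the $e_{N-1}$-coefficient is fixed — equivalently, that the difference lies in $\mathcal{P}_{N-1}$ and vanishes at a unisolvent set, hence is zero. This is where the cited \cite[Proposition 2.5]{calvi2005intertwining} does the real work; if I were to give a self-contained argument I would phrase the uniqueness directly in terms of the nonvanishing Vandermonde determinant $\V(x_0,\dots,x_{N-2})$ guaranteeing unisolvence of the node set $X_{N-1}$ for the space $\mathcal{P}_{N-1}$. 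The remaining manipulations are routine telescoping of the product.
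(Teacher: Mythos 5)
Your proof is correct and follows essentially the same route as the paper, which simply defers to the reasoning of \cite[Proposition 2.5]{calvi2005intertwining}: the inductive telescoping via the identification of $P_{N-1}=\V(x_0,\dots,x_{N-2},\cdot)/\V(x_0,\dots,x_{N-2})$ with $e_{N-1}-L_{X_{N-1}}e_{N-1}$ (both monic in $e_{N-1}$, both vanishing on the unisolvent set $X_{N-1}$, difference in $\mathcal{P}_{N-1}$ hence zero) is exactly the standard argument behind that cited result. Your version is in fact more self-contained than the paper's one-line citation, and all the hypotheses (complete ordered unisolvence guaranteeing nonvanishing denominators and unisolvence of each $X_j$) are used in the right places.
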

\begin{proof}
The proof follows the reasoning used to prove \cite[Proposition 2.5]{calvi2005intertwining}.
\end{proof}
The next proposition follows from \cite[Theorem 4.2]{calvi2005intertwining}.
\begin{proposition} \label{prop:thm4.2_calvi_paper}
    Let $A$, $B$ and $\Omega=A\oplus B$ as above. We have
    \begin{equation}
        \Big(e_j^{(p)} - L_{\Omega_j}e_j^{(p)}\Big)(z) = \big(e_{\varphi_1(j)}^{(p_1)} - L_{A_{\varphi_1(j)}}e_{\varphi_1(j)}^{(p_1)}\big)(z^1) \cdot \big(e_{\varphi_2(j)}^{(p_2)} - L_{B_{\varphi_2(j)}}e_{\varphi_2(j)}^{(p_2)}\big)(z^2)
    \end{equation}
for all $z=(z^1, z^2) \in \C^{p_1} \times \C^{p_2}$ and for all $j \in \N$. 
\end{proposition}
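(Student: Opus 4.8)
The plan is to identify both sides of the asserted identity as the \emph{same} element of $\mathcal{P}_{j+1}^{(p)}$, by characterising the interpolation residual $R_j := e_j^{(p)} - L_{\Omega_j}e_j^{(p)}$ through two universal properties and then checking that the right-hand side shares them. First I would record the one computation that makes the statement plausible: by the defining relation \eqref{def_of_phi} of $\varphi$, the $j$-th monomial on $\C^p$ factors, for $z=(z^1,z^2)$, as
\[
e_j^{(p)}(z) = z^{\kappa^{(p)}(j)} = (z^1)^{\kappa^{(p_1)}(\varphi_1(j))}(z^2)^{\kappa^{(p_2)}(\varphi_2(j))} = e_{\varphi_1(j)}^{(p_1)}(z^1)\, e_{\varphi_2(j)}^{(p_2)}(z^2).
\]
More generally, since $\varphi$ is a bijection of $\N$ onto $\N^2$, every product $e_s^{(p_1)}(z^1)e_t^{(p_2)}(z^2)$ equals the monomial $e_{m}^{(p)}$ whose index $m$ is characterised by $\varphi_1(m)=s$ and $\varphi_2(m)=t$.

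Next I would pin down $R_j$. Since $L_{\Omega_j}e_j^{(p)}\in\mathcal{P}_j^{(p)}$ and Lagrange interpolation reproduces the values at the nodes, $R_j$ is the \emph{unique} polynomial of the form $e_j^{(p)}+(\text{element of }\mathcal{P}_j^{(p)})$ that vanishes at $\omega_0,\dots,\omega_{j-1}$; uniqueness holds because $\Omega_j$ is unisolvent, the intertwining $\Omega=A\oplus B$ being completely ordered unisolvent whenever $A$ and $B$ are. It therefore suffices to show that the right-hand side, which I write as $S_j(z)=R_j^{(1)}(z^1)R_j^{(2)}(z^2)$ with $R_j^{(1)}=e_{\varphi_1(j)}^{(p_1)}-L_{A_{\varphi_1(j)}}e_{\varphi_1(j)}^{(p_1)}$ and $R_j^{(2)}=e_{\varphi_2(j)}^{(p_2)}-L_{B_{\varphi_2(j)}}e_{\varphi_2(j)}^{(p_2)}$, enjoys the same two properties: (a) it has the monic form $e_j^{(p)}+(\text{element of }\mathcal{P}_j^{(p)})$, and (b) it vanishes at each $\omega_i$ with $i<j$. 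For (b) I would evaluate $S_j(\omega_i)=R_j^{(1)}(a_{\varphi_1(i)})\,R_j^{(2)}(b_{\varphi_2(i)})$ and use that $R_j^{(1)}$ annihilates $a_0,\dots,a_{\varphi_1(j)-1}$ while $R_j^{(2)}$ annihilates $b_0,\dots,b_{\varphi_2(j)-1}$; what then remains is to verify that for $i<j$ one cannot have both $\varphi_1(i)\ge\varphi_1(j)$ and $\varphi_2(i)\ge\varphi_2(j)$. For (a) I would expand $S_j$ into monomials $e_{m(s,t)}^{(p)}$ with $s\le\varphi_1(j)$ and $t\le\varphi_2(j)$; the coefficient of $e_j^{(p)}$ is the product of the two leading coefficients $1$, and this is the top term provided every such index satisfies $m(s,t)\le j$ with equality only at $(s,t)=(\varphi_1(j),\varphi_2(j))$.

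Both (a) and (b) thus reduce to a single combinatorial monotonicity property of the graded lexicographic order, which I expect to be the only real obstacle: if $\kappa^{(p)}(m)=(\kappa^{(p_1)}(s),\kappa^{(p_2)}(t))$ and $\kappa^{(p)}(m')=(\kappa^{(p_1)}(s'),\kappa^{(p_2)}(t'))$, then $s\le s'$ and $t\le t'$ force $m\le m'$, with equality exactly when $s=s'$ and $t=t'$. I would prove this straight from the definition of $\prec$: the order is graded, so $s\le s'$ gives $|\kappa^{(p_1)}(s)|\le|\kappa^{(p_1)}(s')|$ and likewise for $t$, whence $|\kappa^{(p)}(m)|\le|\kappa^{(p)}(m')|$; if these total degrees are equal then both block degrees must coincide, and a left-to-right lexicographic comparison (first over the $\C^{p_1}$ block, then over the $\C^{p_2}$ block) settles the inequality, with the strictness clause following since a tie in both blocks forces $(s,t)=(s',t')$. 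Granting this lemma, property (b) is its contrapositive applied to $i<j$, and property (a) is its strict form applied to the monomials occurring in the product; the uniqueness of $R_j$ then yields $R_j=S_j$, which is precisely the claimed identity.
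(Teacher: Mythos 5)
Your argument is correct and complete, but it is worth noting that the paper itself offers no proof at all for this proposition: it is stated as a direct consequence of Theorem 4.2 of Calvi's paper on intertwining unisolvent arrays, so you have in effect reconstructed (the relevant special case of) that external result. Your route --- characterise the Newton remainder $R_j=e_j^{(p)}-L_{\Omega_j}e_j^{(p)}$ as the unique polynomial of the form $e_j^{(p)}+q$, $q\in\mathcal{P}_j^{(p)}$, vanishing on $\Omega_j$, and check that the product of the two univariate-block remainders has the same leading term and the same zeros --- is the natural one, and the combinatorial lemma you isolate (that $\varphi_1(m)\le\varphi_1(m')$ and $\varphi_2(m)\le\varphi_2(m')$ force $m\le m'$, with equality only when both components agree) is exactly the compatibility of the graded lexicographic order with the block decomposition $\N^p=\N^{p_1}\times\N^{p_2}$ that makes the whole intertwining construction work; your proof of it via comparison of total degrees and then a blockwise lexicographic comparison is sound. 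Two small points you should make explicit if you write this up: the uniqueness step needs $\V(\omega_0,\dots,\omega_{j-1})\neq 0$, which is not literally in the hypotheses of the proposition but is implicit in the statement (the operators $L_{\Omega_j}$, $L_{A_{\varphi_1(j)}}$, $L_{B_{\varphi_2(j)}}$ are only defined for unisolvent node sets, and the paper invokes Calvi's stability of complete ordered unisolvence under intertwining for exactly this purpose); and the degenerate case $\varphi_i(j)=0$ should be checked separately, where the convention $L_{X_0}f=0$ gives $R_j^{(i)}=e_0\equiv 1$ and the vanishing at $\omega_0,\dots,\omega_{j-1}$ is carried entirely by the other factor --- your lemma still applies since $\varphi_i(m)\ge 0$ holds trivially.
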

We now combine Proposition \ref{prop:vdm_completely_ordered_unisolvent} and Proposition \ref{prop:thm4.2_calvi_paper} to deduce the following result. Obverse that it involves three forms of vandermondians, with variables $\omega_i$ in  $\C^p$,  $a_i$ in $\C^{p_1}$, and $b_i$ in $\C^{p_2}$.   
\begin{theorem}\label{thm:vdm_intertwining_sequence}
Let $A$, $B$ and $\Omega=A\oplus B$ as above. If, for all $j \in \N^\star$, $A_j$ and $B_j$ are completely ordered unisolvent arrays, then for all $j \in \N^\star$, we have
\begin{equation}
\V(\omega_0, \dots, \omega_{j-1},z) = P_j(z)\cdot \V(\omega_0, \dots, \omega_{j-1}), \label{equ:vdm_intertwinin_sequence}
\end{equation}
where, for $z=(z^1,z^2)$,
\begin{align}
		P_j(z) &= (e_j^{(p)} - L_{\Omega_j}e_j^{(p)})(z)\\
  &= \big(e_{\varphi_1(j)}^{(p_1)} - L_{A_{\varphi_1(j)}}e_{\varphi_1(j)}^{(p_1)}\big)(z^1) \cdot \big(e_{\varphi_2(j)}^{(p_2)} - L_{B_{\varphi_2(j)}}e_{\varphi_2(j)}^{(p_2)}\big)(z^2)
  \end{align}
  Hence, if $\varphi_i(j)>0$ for $i=1,2$, 
		\begin{equation}\label{eq:casenonzero} P_j(z)= \frac{\V(a_0, \dots, a_{\varphi_1(j)-1}, z^1)}{\V(a_0, \dots, a_{\varphi_1(j)-1})} \cdot \frac{\V(b_0, \dots, b_{\varphi_2(j)-1}, z^2)}{\V(b_0, \dots, b_{\varphi_2(j)-1})}.
	\end{equation}
 While if, say, $\varphi_1(j)>0$ and $\varphi_2(j)=0$,  
 \begin{equation}\label{eq:casezero}
     P_j(z)= \frac{\V(a_0, \dots, a_{\varphi_1(j)-1}, z^1)}{\V(a_0, \dots, a_{\varphi_1(j)-1})}. 
 \end{equation}
\end{theorem}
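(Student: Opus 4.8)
The plan is to isolate a single ``master'' identity valid for an arbitrary completely ordered unisolvent array, and then feed it the three sequences $\Omega$, $A$ and $B$. First I would record that, since $A_j$ and $B_j$ are completely ordered unisolvent for every $j$, the stability of this notion under intertwining (\cite[Theorem 3.3]{calvi2005intertwining}, recalled above) makes $\Omega=A\oplus B$ completely ordered unisolvent as well; in particular $\V(\omega_0,\dots,\omega_{j-1})\neq 0$ for every $j$, so each quotient appearing below is legitimate.

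The master identity reads: for any completely ordered unisolvent array $X$ and any $j\ge 1$,
\begin{equation}
\V(x_0,\dots,x_{j-1},z)=\big(e_j-L_{X_j}e_j\big)(z)\cdot\V(x_0,\dots,x_{j-1}).\tag{$\ast$}
\end{equation}
I would prove this by comparing two polynomials in $z$. The quotient $Q(z):=\V(x_0,\dots,x_{j-1},z)/\V(x_0,\dots,x_{j-1})$ lies in $\mathcal{P}_{j+1}$ (expanding the Vandermonde along the row indexed by $z$ writes it as a combination of $e_0,\dots,e_j$), its leading term is $e_j$ (the cofactor of $e_j(z)$ is precisely $\V(x_0,\dots,x_{j-1})$), and it vanishes at $x_0,\dots,x_{j-1}$ (two coinciding points force a repeated row). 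The polynomial $e_j-L_{X_j}e_j$ shares these three properties, and the unisolvence of $X_j$ makes such a monic, $X_j$-vanishing element of $\mathcal{P}_{j+1}$ unique; hence $Q=e_j-L_{X_j}e_j$, which is $(\ast)$. Equivalently, one may simply apply Proposition \ref{prop:vdm_completely_ordered_unisolvent} to the array $(x_0,\dots,x_{j-1},z)$.

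Applying $(\ast)$ to $X=\Omega$ gives \eqref{equ:vdm_intertwinin_sequence} together with the first expression $P_j=e_j^{(p)}-L_{\Omega_j}e_j^{(p)}$. The splitting of $P_j$ into the two lower-dimensional factors is then exactly Proposition \ref{prop:thm4.2_calvi_paper} evaluated at $z=(z^1,z^2)$, which yields the second expression for $P_j$. To obtain the Vandermonde forms I would apply $(\ast)$ to $A$ at index $\varphi_1(j)$ and to $B$ at index $\varphi_2(j)$: when $\varphi_1(j)>0$ this rewrites the first factor as $\V(a_0,\dots,a_{\varphi_1(j)-1},z^1)/\V(a_0,\dots,a_{\varphi_1(j)-1})$, and symmetrically for the second factor when $\varphi_2(j)>0$; multiplying the two produces \eqref{eq:casenonzero}.

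The only point that needs genuine care is the degenerate index, and I expect this rather than any deep difficulty to be the main obstacle. For $j\ge 1$ one cannot have $\varphi_1(j)=\varphi_2(j)=0$, since that would force $\kappa^{(p)}(j)=0$, i.e. $j=0$; so at most one of the two indices vanishes. If, say, $\varphi_2(j)=0$, then $e_{\varphi_2(j)}^{(p_2)}=e_0^{(p_2)}\equiv 1$ and, by the convention $L_{B_0}f=0$ from Proposition \ref{prop:vdm_completely_ordered_unisolvent}, the second factor equals $\big(e_0^{(p_2)}-L_{B_0}e_0^{(p_2)}\big)(z^2)=1$. Thus $P_j$ collapses to its first factor and \eqref{eq:casezero} follows from the $\varphi_1(j)>0$ computation. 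Beyond this bookkeeping and the non-vanishing of all denominators---both guaranteed by the unisolvence established at the start---the argument is a direct assembly of the two cited propositions.
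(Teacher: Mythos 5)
Your argument is correct and is essentially the route the paper takes: the paper derives the theorem by combining Proposition \ref{prop:vdm_completely_ordered_unisolvent} (which, applied to the array $(x_0,\dots,x_{j-1},z)$, is exactly your identity $(\ast)$) with Proposition \ref{prop:thm4.2_calvi_paper}, and you have simply written out the details it leaves implicit. Your treatment of the degenerate index via the convention $L_{X_0}f=0$ and the observation that $\varphi_1(j)=\varphi_2(j)=0$ forces $j=0$ is exactly the right bookkeeping.
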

Now, we state the main result of this section. 
\begin{theorem} \label{thm:intertwining_pseudo_Leja_points}
    Let $K_i$, $i =1,2$, be a determining compact subset in $\C^{p_i}$. Let $A = (a_j)_{j \in \N} \subset K_1$, $B = (b_j)_{j \in \N} \subset K_2$ and $\Omega = A \oplus B$. The following assertions are equivalent:
    \begin{enumerate}
        \item \label{assertion:intertwining_pL1} $\Omega$ is a pseudo Leja sequence for $K  =K_1 \times K_2$.
	\item \label{assertion:intertwining_pL2} $A$ and $B$ are pseudo Leja sequences for $K_1$ and $K_2$ respectively. 
    \end{enumerate}
\end{theorem}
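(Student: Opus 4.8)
The plan is to reduce everything to the factorization of the vandermondian provided by Theorem \ref{thm:vdm_intertwining_sequence} together with the product structure of $K = K_1 \times K_2$. First I record two preliminaries. The set $K$ is determining: a polynomial vanishing on $K_1 \times K_2$ vanishes, for each fixed $z^2 \in K_2$, as a polynomial in $z^1$ on the determining set $K_1$, so its coefficients (polynomials in $z^2$) vanish on $K_2$, hence identically. Moreover, by \cite[Theorem 3.3]{calvi2005intertwining}, $\Omega = A \oplus B$ is completely ordered unisolvent if and only if both $A$ and $B$ are; since a pseudo Leja sequence on a determining compact has nonvanishing vandermondians (see \eqref{eq:pslejunis}), in either direction of the claimed equivalence all of $A_j$, $B_j$, $\Omega_j$ are completely ordered unisolvent, so that Theorem \ref{thm:vdm_intertwining_sequence} applies throughout.

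The key reformulation is as follows. Write $\alpha = \varphi_1(j)$, $\beta = \varphi_2(j)$. Dividing \eqref{property1} for $\Omega$ by $|\V(\omega_0, \dots, \omega_{j-1})| > 0$ and using \eqref{equ:vdm_intertwinin_sequence} at $z$ and at $z = \omega_j$, the defining inequality at index $j$ becomes $M_j |P_j(\omega_j)| \ge \max_{z \in K}|P_j(z)|$. By \eqref{eq:casenonzero} (resp. \eqref{eq:casezero}), $P_j$ factors as $Q_\alpha(z^1) R_\beta(z^2)$, where $Q_\alpha$, $R_\beta$ are exactly the polynomials of type \eqref{eq:defPk} attached to $A$ and $B$, with the conventions $Q_0 \equiv 1$, $R_0 \equiv 1$. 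Since $\omega_j = (a_\alpha, b_\beta)$ and $K = K_1 \times K_2$, both sides split into products and the inequality reads
\[ M_j\, |Q_\alpha(a_\alpha)|\,|R_\beta(b_\beta)| \ \ge\ \Big(\max_{z^1 \in K_1}|Q_\alpha(z^1)|\Big)\Big(\max_{z^2 \in K_2}|R_\beta(z^2)|\Big). \]

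For \eqref{assertion:intertwining_pL2} $\Rightarrow$ \eqref{assertion:intertwining_pL1} I would set $M_j := M'_{\varphi_1(j)}\,M''_{\varphi_2(j)}$, with $M'_0 = M''_0 = 1$, where $(M'_\alpha)$, $(M''_\beta)$ are the Edrei growths of $A$, $B$. Multiplying the defining inequalities of $A$ at index $\alpha$ and of $B$ at index $\beta$ yields precisely the displayed inequality; when one of $\alpha,\beta$ equals $0$ the corresponding factor is $\equiv 1$ and only the other condition is used (for $j \ge 1$ not both vanish). The substantial point is the growth condition \eqref{property2} for $(M_j)$. Setting $m'_n = \max_{h_{n-1}^{(p_1)} \le \alpha < h_n^{(p_1)}} M'_\alpha$ (and $m'_0 = 1$), likewise $m''_n$, one has for $h_{d-1}^{(p)} \le j < h_d^{(p)}$ that $|\kappa^{(p_1)}(\varphi_1(j))| + |\kappa^{(p_2)}(\varphi_2(j))| = d$, whence $M_j \le \max_{d_1 + d_2 = d,\ d_1,d_2 \ge 0} m'_{d_1} m''_{d_2}$. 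Fixing $\epsilon > 0$ and using $(m'_n)^{1/n} \to 1$, $(m''_n)^{1/n}\to 1$, one obtains constants $C_\epsilon, C'_\epsilon \ge 1$ with $m'_n \le C_\epsilon(1+\epsilon)^n$ and $m''_n \le C'_\epsilon(1+\epsilon)^n$ for all $n \ge 0$, so $m'_{d_1} m''_{d_2} \le C_\epsilon C'_\epsilon (1+\epsilon)^d$ uniformly in $d_1 + d_2 = d$; taking $d$-th roots, letting $d \to \infty$ and then $\epsilon \to 0$ gives the limit $1$. This decoupling of the mixed maximum is the main obstacle.

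For \eqref{assertion:intertwining_pL1} $\Rightarrow$ \eqref{assertion:intertwining_pL2}, by symmetry it suffices to treat $A$. Since $\kappa^{(p)}$ is a bijection of $\N$ onto $\N^p$, for each $\alpha \ge 1$ there is a unique $j(\alpha)$ with $\kappa^{(p)}(j(\alpha)) = (\kappa^{(p_1)}(\alpha), 0)$, that is $\varphi_1(j(\alpha)) = \alpha$ and $\varphi_2(j(\alpha)) = 0$. At this index \eqref{eq:casezero} gives $P_{j(\alpha)} = Q_\alpha$, and the reformulated $\Omega$-inequality collapses to $M_{j(\alpha)}|Q_\alpha(a_\alpha)| \ge \max_{z^1 \in K_1}|Q_\alpha(z^1)|$, i.e. the defining inequality of $A$ with $M'_\alpha := M_{j(\alpha)}$. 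The growth condition transfers easily: as $|\kappa^{(p)}(j(\alpha))| = |\kappa^{(p_1)}(\alpha)|$, for $\alpha$ with $|\kappa^{(p_1)}(\alpha)| = d$ one has $h_{d-1}^{(p)} \le j(\alpha) < h_d^{(p)}$, so $1 \le \max_{h_{d-1}^{(p_1)} \le \alpha < h_d^{(p_1)}} M'_\alpha \le \max_{h_{d-1}^{(p)} \le j < h_d^{(p)}} M_j$, and squeezing after taking $d$-th roots yields \eqref{property2} for $(M'_\alpha)$ from that of $(M_j)$. The analogous construction with $\kappa^{(p)}(j) = (0, \kappa^{(p_2)}(\beta))$ handles $B$.
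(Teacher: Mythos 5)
Your proposal is correct and follows essentially the same route as the paper: both directions are reduced, via the factorization of Theorem \ref{thm:vdm_intertwining_sequence}, to the inequality $M_j|P_j(\omega_j)|\ge \max_K|P_j|$ with $P_j$ splitting into the two univariate-type factors, taking $M_j=M'_{\varphi_1(j)}M''_{\varphi_2(j)}$ in one direction and restricting to indices with $\varphi(N)=(i,0)$ in the other. The only divergence is your verification of the Edrei growth of $(M_j)$ in \eqref{assertion:intertwining_pL2}$\Rightarrow$\eqref{assertion:intertwining_pL1}, where you bound $M_j$ by $\max_{d_1+d_2=d}m'_{d_1}m''_{d_2}$ and invoke a uniform subexponential estimate $m'_n\le C_\epsilon(1+\epsilon)^n$; this is cleaner than the paper's case analysis on whether $\delta_1(d)=|\kappa^{(p_1)}(\varphi_1(j_d))|$ stays bounded, and it sidesteps the paper's unjustified remark that the $\delta_i(d)$ are increasing.
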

\begin{proof}
We prove $\eqref{assertion:intertwining_pL1}\implies \eqref{assertion:intertwining_pL2}$. 
Supposing that $\Omega$ is a pseudo Leja sequence for $K$ of Edrei growth $(M_j)_{j\ge 1}$. We prove that $A$ is a pseudo Leja sequence for $K_1$. Let $i\in \mathbb{N}$ and $N \in \N$ such that $ \varphi(N) = (i,0)$. From the definition of a pseudo Leja sequence, we have
\begin{equation*}
M_N|\V(\omega_0, \dots, \omega_{N-1}, \omega_N)| \ge \max_{z \in K} |\V(\omega_0, \dots, \omega_{N-1},z)|.\end{equation*}
 Applying Theorem \ref{thm:vdm_intertwining_sequence} to both sides of the above inequality, we obtain
 \begin{equation*}
     M_N|P_N(\omega_N)| \ge \max_{z \in K} |P_N(z)|.
 \end{equation*}
 Since $\omega_N=(a_i,b_0)$, in view of \eqref{eq:casezero}, the above inequality reduces to
 \begin{equation*}
     M_N |\V(a_0, \dots, a_{i-1}, a_i)| \ge \max_{z^1 \in K_1} |\V(a_0, \dots, a_{i-1}, z^1)|.
 \end{equation*}
Therefore, the sequence $A$ will satisfy the inequality property of a pseudo Leja sequence on condition that the growth sequence  
\[M'_i = M_N, \quad \varphi(N)=(i,0)\] satisfies the Edrei growth condition in \eqref{property2}, that is
    \begin{equation*}
 \lim_{d \to + \infty} \max_{h_{d-1}\le j < h_{d}}(M'_j)^{1/d} = 1,
    \end{equation*}
where $h_s=h_s^{(p_1)}$ is the dimension of the space of polynomial of degree $s$ on the ground space of $A$, that is $\C^{p_1}$. 
Let $d \ge 1$ and let \[\mathcal{M}_i =M'_i= \max_{h^{(p_1)}_{d-1} \le j < h^{(p_1)}_{d}}M'_j.\]
As previously, we consider the number $N = N(i)$ such that $\varphi(N) = (i, 0)$. Therefore, $\kappa^{(p)}(N) = (\kappa^{(p_1)}(i), 0)$ and $|\kappa^{(p)}(N)| = |\kappa^{(p_1)}(i)| = d$. It follows that $N \in \{h_{d-1}^{(p)}, \dots, h_d^{(p)}-1\}$ and hence
\[\mathcal{M}_i \leq \max_{h^{(p)}_{d-1} \le j < h^{(p)}_d} M_j.\]
Taking the $1/d$ root and passing to the limit as $d\to \infty$, we obtain the result, taking into account that $\mathcal{M}_i \ge 1$, thanks to the growth condition of pseudo Leja sequences. 
Thus, $A$ is a pseudo Leja sequence of Edrei growth $(M'_i)_{i \ge 1}$. Of course, the same reasoning shows that $B$ is also a pseudo Leja sequence for $K_2$.
\par\smallskip 	
We now prove $\eqref{assertion:intertwining_pL2} \implies \eqref{assertion:intertwining_pL1}$. We assume that $A$ is a pseudo Leja sequence for $K_1$ of Edrei growth $(M_j')_{j\ge 1}$ and $B$ is a pseudo Leja sequence for $K_2$ of Edrei growth $(M_j'')_{j\ge 1}$. We show that $\Omega=A\oplus B$ is a pseudo Leja sequence for $K$.\par
Fix $j \in \N^\star$. If $\varphi_i(j)>0$, $i=1,2$, from Theorem \ref{thm:vdm_intertwining_sequence}, we have
\begin{equation*}
    |\V(\omega_0, \dots, \omega_{j-1}, \omega_j)| = |P_j(\omega_j)|\cdot |\V(\omega_0, \dots, \omega_{j-1})|,
\end{equation*}
where
\[
    P_j(\omega_j) = \frac{\V(a_0, \dots, a_{\varphi_1(j)-1}, a_{\varphi_1(j)})}{\V(a_0, \dots, a_{\varphi_1(j)-1})} \cdot \frac{\V(b_0, \dots, b_{\varphi_2(j)-1}, b_{\varphi_2(j)})}{\V(b_0, \dots, b_{\varphi_2(j)-1})}.
\]
In view of \eqref{eq:casezero}, one readily check that the above formula remains true when $\varphi_i(j)=0$, $i=1$ or $2$. 
Using the hypothesis that the factor sequences are pseudo Leja sequences, we have
\begin{align*}
    M'_{\varphi_1(j)} M''_{\varphi_2(j)} |P_j(\omega_j)| &\ge \max_{z^1 \in K_1}  |\frac{\V(a_0, \dots, a_{\varphi_1(j)-1}, z^1)}{\V(a_0, \dots, a_{\varphi_1(j)-1})}| \cdot \max_{z^2 \in K_2} |\frac{\V(b_0, \dots, b_{\varphi_2(j)-1}, z^2)}{\V(b_0, \dots, b_{\varphi_2(j)-1})}| \\
    &= \max_{z \in K} |P_j(z)|.
\end{align*}
Therefore,  
\begin{equation*}
    M_j |\V(\omega_0, \dots, \omega_j)| \ge \max_{z \in K} |P_j(z) \V(\omega_0, \dots, \omega_{j-1})| = \max_{z \in K} |\V(\omega_0, \dots, \omega_{j-1}, z)| 
\end{equation*}
	where $M_j = M'_{\varphi_1(j)}M''_{\varphi_2(j)}$ with the convention that $M'_{0} = M''_0 = 1$. 
It remains to prove that 
 \begin{equation}\label{eq:rtp}
     \lim_{d \to + \infty} \max_{h^{(p)}_{d-1}\le j < h^{(p)}_{d}}M_j^{1/d} = 1.
 \end{equation}
 Fix $d \ge 1$. Let $j_d \in \{h_{d-1}^{(p)}, \dots,h_{d}^{(p)}-1\} $ such that \[ M_{j_d} = \max_{h^{(p)}_{d-1}\le i < h^{(p)}_{d}}M_j.\] From the definition of $\varphi$, see \eqref{eq:kappa}, we have 
\begin{align}
		& \kappa^{(p)}(j) = \big(\kappa^{(p_1)}(\varphi_1(j)), \kappa^{(p_2)}(\varphi_2(j))\big) \\
  \implies & d=|\kappa^{(p)}(j)| = |\kappa^{(p_1)}(\varphi_1(j))| + |\kappa^{(p_2)}(\varphi_2(j))|. \label{eq:1and2}
	\end{align}
 Let $\delta_i(d) = |\kappa^{(p_i)}(\varphi_i(j_d))|$ for $i=1,2$. Now we have
\[M_{j_d}^{1/d}=
\left((M'_{\varphi_1(j_d)})^{1/|\delta_1(d)|}\right)^{|\delta_1(d)|/d}
\times
\left((M''_{\varphi_2(j_d)})^{1/|\delta_2(d)|}\right)^{|\delta_2(d)|/d}.\]
We consider two cases. If say $\delta_1(d)$ is bounded then, in view of \eqref{eq:1and2}, $\delta_2(d)/d \to 1$, and \eqref{eq:rtp} follows from the fact that $\delta_1(d)/d\to 0$ and that, $B$ being a pseudo Leja sequence, $M''$ satisfies the required growth condition. Since, by \eqref{eq:1and2}, both $\delta_i(d)/d$ cannot be bounded, it remains only to the study the case for which they are unbounded hence, since they are increasing, tending to $\infty$. In that case, \eqref{eq:rtp} follows from the fact that $M'$ and $M''$ satisfies the Edrei growth condition and that $\delta_i(d)/d$ is bounded by $1$, $i=1,2$.
\end{proof} 
Theorem \ref{thm:intertwining_pseudo_Leja_points} is a generalisation of Irigoyen's recent result in \cite[Theorem 1 ]{Amadeo2021} about the intertwining of Leja sequences when $p_1 = p_2 = 1$. It is well known that the intertwining of two good univariate sequences of interpolation points (good in the sense given in the introduction) is also good for the interpolation of holomorphic functions in $\mathbb{C}^2$. This property was first proved by Siciak in \cite{siciak1962some} but can also be found in \cite{bertrand2022newton}, \cite{bialas2012pseudo} where different proofs are proposed.

\section{Computing pseudo-Leja points}\label{sec:computing}
In practice, we compute finitely many points, even the first few points, see below, of a pseudo-Leja sequences. Such a computation is based on the following observation which shows how to obtain pseudo-Leja points on $K$ as Leja points for a finite subset of $K$. 
\begin{proposition} Let $A$ a finite subset of $K\in\C^p$ that is determining for the space of polynomial of degree $\leq d$ (hence $\textbf{Card}(A) \ge h_d= h_d^{(p)}$).
We define inductively $h_d$ points as follows.
\begin{enumerate}
    \item We choose arbitrarily $\xi_0 \in A$ and,
    \item for each $d \in \mathbb{N}$, we compute recursively the points $\xi_{h_{d-1}}, \dots, \xi_{h_{d}-2}$ and $\xi_{h_{d}-1}$ from  $A$ such that 
	\begin{equation}
		|\V(\xi_0, \dots, \xi_{N-1},\xi_N)| = \max_{\xi \in A} |\V(\xi_0, \dots, \xi_{N-1}, \xi)|,
	\end{equation}
for $h_{d-1} \le N < h_d$.
\end{enumerate} 
Then the points $\xi_j$ forms the first $h_d$ points of a pseudo-Leja sequences for $K$ of (constant) Edrei growth $M_A$ where 
\[ M_A= \max\{ 1/\|P\|_A, \quad \|P\|_K=1, \deg(P)\leq d\}.\] 
These points are called \textbf{discrete Leja points} for $A$. 
\end{proposition}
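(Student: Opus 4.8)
The plan is to verify directly that the constructed points satisfy the two defining properties of a pseudo-Leja sequence for $K$ with the \emph{constant} growth sequence $M_N\equiv M_A$. First I would record two preliminary facts. The construction is well-defined: arguing by induction as in the introduction, if $\V(\xi_0,\dots,\xi_{N-1})\neq 0$ then $z\mapsto\V(\xi_0,\dots,\xi_{N-1},z)$ is a nonzero polynomial of total degree $|\kappa(N)|\le d$, since its $e_N$-coefficient equals $\V(\xi_0,\dots,\xi_{N-1})$; as $A$ is determining for polynomials of degree $\le d$, this polynomial cannot vanish identically on $A$, so the maximum over $A$ is strictly positive and $\xi_N$ can be chosen. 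I would also check that $M_A\in[1,+\infty)$: the set $\{P:\deg P\le d,\ \|P\|_K=1\}$ is compact in the finite-dimensional space $\mathcal{P}_{h_d}$, on which $P\mapsto\|P\|_A$ is continuous and strictly positive by the determining property, so $M_A<\infty$; moreover $M_A\ge 1$ because $A\subset K$ forces $\|P\|_A\le\|P\|_K$.

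Next, the growth condition \eqref{property2} is immediate, as $(M_A)^{1/d}\to 1$ for a constant sequence, so the whole content lies in the inequality \eqref{property1}. I would fix $N$ with $1\le N<h_d$, set $d'=|\kappa(N)|\le d$, and observe that the monic polynomial $P_N$ of \eqref{eq:defPk} then has degree $d'$. Dividing the defining equation of the construction by $|\V(\xi_0,\dots,\xi_{N-1})|$ converts the discrete maximization of the Vandermonde determinant over $A$ into the identity
\[
|P_N(\xi_N)|=\max_{\xi\in A}|P_N(\xi)|=\|P_N\|_A.
\]

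It then remains to compare $\|P_N\|_A$ with $\|P_N\|_K$. By homogeneity, the definition of $M_A$ gives $\|P\|_K\le M_A\,\|P\|_A$ for every nonzero $P$ with $\deg P\le d$; applied to $P_N$ (of degree $d'\le d$), this yields $\|P_N\|_K\le M_A\,|P_N(\xi_N)|$. Multiplying by $|\V(\xi_0,\dots,\xi_{N-1})|$ and recalling \eqref{eq:defPk} recovers
\[
M_A\,|\V(\xi_0,\dots,\xi_{N-1},\xi_N)|\ \ge\ \max_{z\in K}|\V(\xi_0,\dots,\xi_{N-1},z)|,
\]
which is exactly \eqref{property1} with $M_N=M_A$.

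The single delicate point I expect is the passage from the degree-$\le d$ comparison encoded in $M_A$ to the Vandermonde inequality: the key is to recognize that, after normalization, the discrete maximization is precisely a statement about the monic polynomial $P_N$, and that $\deg P_N=|\kappa(N)|\le d$, so that one constant $M_A$ simultaneously controls every step $N<h_d$. The well-definedness and the growth condition are routine by comparison.
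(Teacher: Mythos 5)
Your proof is correct and follows essentially the same route as the paper: both rest on the observation that $\xi\mapsto \V(\xi_0,\dots,\xi_{N-1},\xi)$ is a polynomial of degree at most $d$, so the constant $M_A$ (finite by compactness) converts the maximum over $A$ into a bound for the maximum over $K$. Your normalization to the monic $P_N$ is equivalent by homogeneity, and your extra checks (well-definedness, $M_A\ge 1$) are routine details the paper leaves implicit.
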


\begin{proof} Observe that the finiteness of $M_A$ follows from the facts that $A$ is determining for the space of polynomial of degree $\leq d$ and that the set of polynomials of bounded degree $d$ and sup norm not bigger than $1$ on $K$ is compact. This being said, 
the claim follows immediately from the observation that for every $h_{d-1} \le N < h_{d}$, the function
 \[\xi \longmapsto |\V(\xi_1, \dots, \xi_{N-1}, \xi)|\] is a polynomial of $n$ complex variables and of degree at most $d$ so that, by definition of $M_A$,

 \[\max_{\xi\in K} |\V(\xi_1, \dots, \xi_{N-1}, \xi)| \leq M_A\max_{\xi\in A} |\V(\xi_1, \dots, \xi_{N-1}, \xi). \qedhere\]
\end{proof}

In practice, the smallest the constant $M_A$ the better the points. In general, it is not easy to select a mesh $A$ with low constant $M_A$ and low cardinality. 
The best algorithm currently available to compute discrete Leja points is provided in \cite{bos2010computing}. In short, it is as follows. If we write $ A=\{a_1, \dots, a_M\}$ ($M> h_d$) and if $\textbf{P} = \{P_1, \dots, P_{h_d}\}$ is an ordered polynomial basis then applying the standard LU factorisation with row pivoting to the following rectangular Vandermonde matrix
\begin{equation}
    \Vm(a_1, \dots, a_M;\textbf{P}) = [P_j(a_i)]_{1\le i \le M, 1\le j\le h_d}.
\end{equation} produces a permutation vector $\sigma$ whose first $h_d$ components are the indices of the first $h_d$ Leja points in A. During this process, the matrix $\Vm(a_1, \dots, a_M;\textbf{P})$ is progressively modified by operations on the rows and the pivot row is always selected from the candidates with the largest absolute value in the first column of the submatrix of interest.
Observe, due to the large dimension of the determinant involved, one can currently compute points only for low degrees, say $d=10$ for $p=2$.

Finally, we mention a (theoretical) method, based on the same idea as the above proposition, using the concept of (weakly) admissible mesh (for $K$) introduced in \cite{calvi2008uniform} which consists of a sequence of sets $(A_d)_d\in \N$, such that $A_d\subset K$ is determining for the space of polynomials of degree $\leq d$ and both $M_{A_d}$ and the cardinality of $A_d$ grows subexponentially as $d\to\infty$. The proof is immediate and is omitted. 

\begin{proposition}\label{thm:6.1} Let $K$ a determining compact set in $\C^p$. and $(A_d)_{d\in\N}$ an admissible mesh for $K$. A pseuso-Leja sequence $(\xi_N)_{N\in \N}$ for $K$ of Edrei growth $\overline{M}_N$ with
\[\overline{M}_N = M_{A_d}, \quad h_{d-1}\le N < h_{d},\]
is obtained as follows :
\begin{enumerate}
    \item  $\xi_0 \in A_0$ and, 
    \item for each $d \in \mathbb{N}$, we compute recursively the points $\xi_{h_{d-1}}, \dots, \xi_{h_{d}-2}$ and $\xi_{h_{d}-1}$ from  $\mathcal{A}_d$ such that 
	\begin{equation}
		|\V(\xi_0, \dots, \xi_{N-1},\xi_N)| = \max_{\xi \in A_d} |\V(\xi_0, \dots, \xi_{N-1}, \xi)|.
	\end{equation}
\end{enumerate}
\end{proposition}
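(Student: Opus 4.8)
The plan is to verify directly that the sequence $(\xi_N)$ produced by the recursion satisfies the two defining conditions of Definition \ref{def:1}, namely the inequality \eqref{property1} with $M_N = \overline{M}_N$ and the Edrei growth condition \eqref{property2}. The argument closely parallels that of the preceding proposition on discrete Leja points; the only new features are that the mesh now varies with the degree and that one must invoke the subexponential growth of $M_{A_d}$ to recover \eqref{property2}.

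First I would check that the recursion is well defined, i.e. that at each stage $\xi_N$ can be chosen so that $\V(\xi_0, \dots, \xi_N) \neq 0$. This follows by induction on $N$: assuming $\V(\xi_0, \dots, \xi_{N-1}) \neq 0$, the map $\xi \mapsto \V(\xi_0, \dots, \xi_{N-1}, \xi)$ is a polynomial whose $e_N$-coefficient equals $\V(\xi_0, \dots, \xi_{N-1}) \neq 0$, hence a nonzero polynomial of total degree $|\kappa(N)|$. For $h_{d-1} \le N < h_d$ this degree equals $d$, and since $A_d$ is determining for polynomials of degree $\le d$, the polynomial cannot vanish identically on $A_d$; therefore its maximum over $A_d$ is strictly positive and the selected $\xi_N$ yields a nonzero Vandermonde determinant.

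Next I would establish \eqref{property1}. Fix $d$ and $N$ with $h_{d-1} \le N < h_d$, and set $Q(\xi) = \V(\xi_0, \dots, \xi_{N-1}, \xi)$, which by the computation above is a polynomial of degree $\le d$. By the defining property of the mesh constant (as in the preceding proposition), every polynomial $P$ of degree $\le d$ obeys $\|P\|_K \le M_{A_d}\,\|P\|_{A_d}$. Applying this to $Q$ and using that the construction selects $\xi_N$ so that $\max_{\xi \in A_d}|Q(\xi)| = |\V(\xi_0, \dots, \xi_{N-1}, \xi_N)|$, I obtain
\[
\max_{z \in K}|\V(\xi_0, \dots, \xi_{N-1}, z)| \le M_{A_d}\,|\V(\xi_0, \dots, \xi_{N-1}, \xi_N)| = \overline{M}_N\,|\V(\xi_0, \dots, \xi_{N-1}, \xi_N)|,
\]
which is precisely \eqref{property1} with the announced growth sequence.

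Finally, for \eqref{property2}, I would use that $\overline{M}_N = M_{A_d}$ is constant on each block $h_{d-1} \le N < h_d$, so that $\max_{h_{d-1} \le N < h_d}\overline{M}_N = M_{A_d}$ and \eqref{property2} reduces to $\lim_{d \to \infty} M_{A_d}^{1/d} = 1$. Since $A_d \subset K$ forces $M_{A_d} \ge 1$, while the definition of an admissible mesh forces $M_{A_d}$ to grow subexponentially, this limit holds. There is essentially no genuine obstacle here; the only point requiring care is to track the block structure so that the degree of $Q$ matches the index $d$ of the mesh used on that block, which is exactly what allows the single-degree estimate defining $M_{A_d}$ to apply uniformly across the block.
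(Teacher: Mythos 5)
Your proposal is correct and follows exactly the route the paper intends: the paper omits the proof but explicitly says it rests on the same idea as the preceding proposition, namely that $\xi \mapsto \V(\xi_0,\dots,\xi_{N-1},\xi)$ is a polynomial of degree at most $d$ on the block $h_{d-1}\le N < h_d$, so the mesh inequality $\|P\|_K \le M_{A_d}\|P\|_{A_d}$ gives \eqref{property1} with $\overline{M}_N = M_{A_d}$, and the admissible-mesh condition gives \eqref{property2}. Your additional check that the recursion is well defined (nonvanishing of the Vandermonde determinants via the determining property of $A_d$) is a sensible inclusion that the paper leaves implicit.
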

 
\subsection*{Acknowledgement}
I would like to thank Professor Leokadia Bia{\l}as-Cie{\.z}, my research supervisor, for her patient guidance, enthusiastic encouragement and useful critiques of this work. I would also like to express my sincere gratitude to Professor Jean-Paul Calvi for his unconditional help in structuring and writing this paper.\\
Funding: This work was partially supported  by the National Science Center, Poland, grant Preludium Bis 1 N. 2019/35/O/ST1/02245.


\begin{thebibliography}{10}

\bibitem{baglama1998fast}
J.~Baglama, D.~Calvetti, and L.~Reichel.
\newblock Fast {L}eja points.
\newblock {\em Electron. Trans. Numer. Anal}, 7(124-140):119--120, 1998.

\bibitem{berman2008equidistribution}
R.~Berman and S.~Boucksom.
\newblock Equidistribution of {F}ekete points on complex manifolds.
\newblock {\em arXiv preprint arXiv:0807.0035}, 2008.

\bibitem{berman2011fekete}
R.~Berman, S.~Boucksom, and D.~W. Nystr{\"o}m.
\newblock Fekete points and convergence towards equilibrium measures on complex
  manifolds.
\newblock {\em Acta Math.}, 207(1):1--27, 2011.

\bibitem{bertrand2022newton}
F.~Bertrand and J.-P. Calvi.
\newblock The {N}ewton product of polynomial projectors. {P}art 2:
  approximation properties.
\newblock {\em Rend. Circ. Mat. Palermo (2)}, pages 1--34, 2022.

\bibitem{bialas2012pseudo}
L.~Bia{\l}as-Cie{\.z} and J.-P. Calvi.
\newblock Pseudo {L}eja sequences.
\newblock {\em Ann. Mat. Pura Appl.}, 91(1):53--75, 2012.

\bibitem{bloom2001families}
T.~Bloom.
\newblock On families of polynomials which approximate the pluricomplex {G}reen
  function.
\newblock {\em Indiana Univ. Math. J.}, pages 1545--1566, 2001.

\bibitem{bloom2012polynomial}
T.~Bloom, L.~Bos, J-P Calvi, and N.~Levenberg.
\newblock Polynomial interpolation and approximation in $ \mathbb{C}^d$.
\newblock In {\em Ann. Pol. Math.}, volume 1(106), pages 53--81, 2012.

\bibitem{bloom1992polynomial}
T.~Bloom, L.~Bos, C.~Christensen, and N.~Levenberg.
\newblock Polynomial interpolation of holomorphic functions in $\mathbb{C}$ and
  $\mathbb{C}^n$.
\newblock {\em Rocky Mountain J. Math.}, 22(2):441--470, 1992.

\bibitem{bloom1998distribution}
T.~Bloom and J.-P. Calvi.
\newblock The distribution of extremal points for {K}ergin interpolations: real
  case.
\newblock In {\em Ann. Inst. Fourier}, volume 48(1), pages 205--222, 1998.

\bibitem{bos2006bivariate}
L.~Bos, M.~Caliari, S.~De~Marchi, M.~Vianello, and Y.~Xu.
\newblock Bivariate {L}agrange interpolation at the {P}adua points: the
  generating curve approach.
\newblock {\em J. Approx. Theory}, 143(1):15--25, 2006.

\bibitem{bos2010computing}
L.~Bos, S.~De~Marchi, A.~Sommariva, and M.~Vianello.
\newblock Computing multivariate {F}ekete and {L}eja points by numerical linear
  algebra.
\newblock {\em SIAM J Numer Anal}, 48(5):1984--1999, 2010.

\bibitem{bos2011multivariate}
L.~Bos, S.~De~Marchi, A.~Sommariva, and M.~Vianello.
\newblock On multivariate {N}ewton interpolation at discrete {L}eja points.
\newblock {\em Dolomites Res. Notes Approx}, 4(Proceedings of Robert Schaback's
  65th birthday):15--20, 2011.

\bibitem{bos2007bivariate}
L.~Bos, S.~De~Marchi, M.~Vianello, and Y.~Xu.
\newblock Bivariate {L}agrange interpolation at the {P}adua points: the ideal
  theory approach.
\newblock {\em Numer Math}, 108:43--57, 2007.

\bibitem{caliari2005bivariate}
M.~Caliari, S.~De~Marchi, and M.~Vianello.
\newblock Bivariate polynomial interpolation on the square at new nodal sets.
\newblock {\em Appl. Math. Comput}, 165(2):261--274, 2005.

\bibitem{calvi2005intertwining}
J.-P. Calvi.
\newblock Intertwining unisolvent arrays for multivariate {L}agrange
  interpolation.
\newblock {\em Adv. Comput. Math.}, 23(4):393--414, 2005.

\bibitem{calvi2008uniform}
J.-P. Calvi and N.~Levenberg.
\newblock Uniform approximation by discrete least squares polynomials.
\newblock {\em J. Approx. Theory}, 152(1):82--100, 2008.

\bibitem{fekete1923verteilung}
M.~Fekete.
\newblock {\"U}ber die verteilung der {W}urzeln bei gewissen algebraischen
  {G}leichungen mit ganzzahligen {K}oeffizienten.
\newblock {\em Math. Zeitschrift}, 17(1):228--249, 1923.

\bibitem{gaier1987lectures}
D.~Gaier.
\newblock {\em Lectures on complex approximation}.
\newblock Birkhauser Boston Inc., 1987.

\bibitem{Amadeo2021}
A.~Irigoyen.
\newblock Multidimensional intertwining {L}eja sequences and applications in
  bidimensional {L}agrange interpolation.
\newblock {\em J. Approx. Theory}, 264:105540, 2021.

\bibitem{jkedrzejowski1992transfinite}
M.~J{{e}}drzejowski.
\newblock Transfinite diameter and extremal points for a compact subset of
  $\mathbb{C}^n$.
\newblock {\em Univ. Iagell. Acta. Math.}, 29:65--70, 1992.

\bibitem{klimek1991pluripotential}
M.~Klimek.
\newblock Pluripotential theory.
\newblock {\em London Math. Soc. Monogr.(NS)}, 1991.

\bibitem{leja1959problemes}
F.~Leja.
\newblock Probl{\`e}mes {\`a} r{\'e}soudre pos{\'e}s {\`a} la conf{\'e}rence.
\newblock In {\em Colloq. Math}, volume 7(1), page 153, 1959.

\bibitem{siciak1962some}
J.~Siciak.
\newblock On some extremal functions and their applications in the theory of
  analytic functions of several complex variables.
\newblock {\em Trans. Amer. Math. Soc.}, 105(2):322--357, 1962.

\bibitem{totik2023lebesgue}
V.~Totik.
\newblock The {L}ebesgue constants for {L}eja points are subexponential.
\newblock {\em J. Approx. Theory}, page 105863, 2023.

\bibitem{zaharjuta1975transfinite}
V.P. Zaharjuta.
\newblock Transfinite diameter, {C}hebyshev constants and capacity for compacts
  in $\mathbb{C}^n$.
\newblock {\em Math. USSR-Sb.}, 25(3):350, 1975.

\end{thebibliography}

\end{document}